\numberwithin{equation}{section}
\newtheorem{theorem}{Theorem}[section]
\newtheorem{proposition}[theorem]{Proposition}
\newtheorem{lemma}[theorem]{Lemma}
\newtheorem{corollary}[theorem]{Corollary}
\newtheorem*{theorem*}{Theorem}
\newtheorem*{claim*}{Claim}
\newtheorem*{proposition*}{Proposition}
\newtheorem*{lemma*}{Lemma}
\newtheorem*{corollary*}{Corollary}
\newtheorem{theoremA}{Theorem}
\theoremstyle{definition}
\newtheorem{remark}[theorem]{Remark}
\newtheorem{example}[theorem]{Example}
\newtheorem{question}[theorem]{Question}
\newtheorem*{definition*}{Definition}
\newtheorem*{observation*}{Observation}
\newtheorem*{remark*}{Remark}
\newtheorem*{example*}{Example}
\newtheorem*{question*}{Question}
\newtheorem*{exercise*}{Exercise}
\newtheorem*{fact*}{Fact}
\newtheorem*{notation*}{Notation}
\newcommand{\bbH}{\mathbb{H}}
\newcommand{\bbN}{\mathbb{N}}
\newcommand{\bbR}{\mathbb{R}}
\newcommand{\bbZ}{\mathbb{Z}}
\newcommand{\bfG}{\mathbf{G}}
\newcommand{\calF}{\mathcal{F}}
\newcommand{\calG}{\mathcal{G}}
\newcommand{\calH}{\mathcal{H}}
\newcommand{\actson}{\curvearrowright}
\newcommand{\actsno}{\curvearrowleft}
\newcommand{\cechH}{\check{H}}
\newcommand{\into}{\hookrightarrow}
\newcommand{\ii}{^{-1}}
\newcommand{\gen}[1]{\left< #1 \right>}
\newcommand{\tild}[1]{\widetilde{#1}}
\DeclareMathOperator{\Stab}{Stab}
\DeclareMathOperator{\Aut}{Aut}
\DeclareMathOperator{\diam}{diam}
\DeclareMathOperator{\Comm}{Comm}
\DeclareMathOperator{\diff}{diff}
\DeclareMathOperator{\im}{Im}
\DeclareMathOperator{\Vol}{Vol}
\DeclareMathOperator{\rank}{rank}
\DeclareMathOperator{\Cx}{C}
\DeclareMathOperator{\covol}{Covol}
\DeclareMathOperator{\Isom}{Isom}
\DeclareMathOperator{\I}{I}
\DeclareMathOperator{\length}{length}
\title{Volume vs. Complexity of Hyperbolic Groups}
\author{Nir Lazarovich\thanks{Supported by the Israel Science Foundation (grant no. 1562/19), and by the German-Israeli Foundation for Scientific Research and Development.}}
\date{}
\begin{document}

\maketitle

\begin{abstract}
    We prove that for a one-ended hyperbolic graph $X$, the size of the quotient $X/G$ by a group $G$ acting freely and cocompactly bounds from below the number of simplices in an Eilenberg-MacLane space for $G$.
    We apply this theorem to show that one-ended hyperbolic cubulated groups (or more generally, one-ended hyperbolic groups with globally stable cylinders \`a la Rips-Sela) cannot contain isomorphic finite-index subgroups of different indices.
\end{abstract}

\section{Introduction}
\paragraph{Complexity vs volume.}
Let $G$ act freely and cocompactly on $\bbH^n$, $n\ge 2$. For $n=2$, the Gauss-Bonnet formula shows that the Euler characteristic of the group $G$ is proportional to the volume of the quotient $\bbH^2/G$.
In higher dimensions, Gromov and Thurston \cite{gromov1982volume,thurston1979geometry} showed that the simplicial volume of $G$ is proportional to the hyperbolic volume of the quotient $\bbH^n/G$.

In the above results, a topological feature of the group $G$ is shown to be related to the volume of the quotient $\bbH^n/G$. 
This phenomenon was extensively studied for groups acting on non-positively curved manifolds \cite{cooper1999volume,delzant2013complexity,ballmann1985manifolds,gelander2004homotopy,gelander2011volume,gelander2019minimal,bader2020homology,belolipetsky2010counting,gelander2021bounds}. 
In this work, we study the relation between topological complexity and volume in the discrete setting of group actions on hyperbolic simplicial complexes.
For this purpose let us first define what we mean by ``complexity'' and ``volume''. 

Let $G$ be a group. An Eilenberg-MacLane space $K=K(G,1)$ for $G$ is a CW-complex such that $\pi_1(K)=G$ and $\pi_n(K)=0$ for all $n\ge 2$.
We define the \emph{topological complexity} $\Cx(G)$ of $G$ to be the minimal number of cells in an Eilenberg-MacLane $\Delta$-complex for $G$.
The topological complexity of a group can be thought of as the group analogue of the Kneser complexity $k(M)$ of a manifold $M$ \cite{kneser1929geschlossene} which is the minimal number of simplices in a triangulation of $M$.

For a locally finite connected $\Delta$-complex $X$, we define the \emph{volume} $\Vol_X(G)$ of a free action $G\actson X$ to be the number of $G$-orbits of cells of $X$.
Note that if $X$ is contractible and $G$ acts on $X$ freely, then $X/G$ is an Eilenberg-MacLane space for $G$ and it follows in this case that $\Cx(G) \le \Vol_X(G)$.
Thus, we will be interested in bounding the complexity $\Cx(G)$ of $G$ from below by $\Vol_X(G)$.


Lower bounds do not hold in general. 
For example, if we consider actions of $\bbZ^n\actson \bbR^n$ by translations, then while the complexity of $\bbZ^n$ is fixed, the volume of the action can be arbitrarily large.
However, for one-ended hyperbolic graphs Theorems \ref{thm: main sqrt bound} and \ref{thm: main volume complexity} below provide lower bounds for the complexity in terms of the volume.

\begin{theoremA}\label{thm: main sqrt bound}
Let $X$ be a one-ended hyperbolic graph. Then there exists $\alpha=\alpha(X)>0$ such that if $G\actson X$ freely and cocompactly then $$\alpha \cdot \sqrt{\Vol_{X}(G)} \le \Cx(G).$$
\end{theoremA}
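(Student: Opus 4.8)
The plan is to establish the equivalent inequality $\Vol_X(G)\le C(X)\cdot\Cx(G)^2$ for a constant $C(X)$ depending only on $X$, and for this I would proceed in three stages: a reduction to a first-Betti-number estimate, a homological reframing of that estimate, and a geometric core. We may assume $G$ is torsion-free, since a group with torsion has no finite-dimensional $K(G,1)$ and hence $\Cx(G)=\infty$. Fix a minimal aspherical $\Delta$-complex $K$ for $G$ with $N:=\Cx(G)$ cells, and let $E:=\tilde K$ be its universal cover: a contractible $\Delta$-complex on which $G$ acts freely and cocompactly with at most $N$ orbits of cells, so that $C_*(E;\bbQ)$ is a free $\bbQ G$-resolution of $\bbQ$ of total $\bbQ G$-rank $\le N$. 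For the reduction, note that because $X$ is one-ended, locally finite, and carries a cocompact automorphism group, there is a bound $C_0(X)$ on the length of a bare arc of $X$ and on the size of a pendant subtree of $X$ (one-endedness rules out bare lines and pendant rays, and cocompactness then forces the rest to be finite); hence the finite quotient graph $Y:=X/G$ satisfies $\Vol_X(G)=|Y|\le C_0(X)\cdot(1+b_1(Y))$, and it suffices to bound $b_1(Y)\le C_1(X)\cdot N^2$.

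For the homological reframing: $X\to Y$ is a regular cover with deck group $G$, so the rational five-term exact sequence of $1\to\pi_1(X)\to\pi_1(Y)\to G\to1$ gives a surjection $(H_1(X;\bbQ))_G\epi\ker\bigl(H_1(Y;\bbQ)\to H_1(G;\bbQ)\bigr)$, whence $b_1(Y)\le\dim_\bbQ(H_1(X;\bbQ))_G+\dim_\bbQ H_1(G;\bbQ)$. The second term is at most $d(G)$, the minimal number of generators of $G$, which is $\le N$. So the whole theorem reduces to the single estimate $\dim_\bbQ(H_1(X;\bbQ))_G\le C_2(X)\cdot N^2$, where $H_1(X;\bbQ)=Z_1(X;\bbQ)$ is just the rational cycle space of the graph $X$, viewed as a $\bbQ G$-module.

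The geometric core is this last estimate, and it is where hyperbolicity and one-endedness of $X$ must genuinely enter — the estimate is false for trees and lines, which the hypotheses exclude. Choose $G$-equivariant cellular maps $\phi\colon X\to E^{(2)}$ and $\psi\colon E^{(1)}\to X$ (these exist by connectedness and cocompactness). The induced $\bbQ G$-chain maps, together with the comparison theorem for resolutions, make $\rho:=\psi_*\phi_*\colon C_*(X)\to C_*(X)$ chain-homotopy idempotent, so $H_1(X;\bbQ)=\im\rho_*\oplus\ker\rho_*$ as $\bbQ G$-modules. The summand $\im\rho_*$ factors through $B_1(E;\bbQ)=\im(\partial_2^E)$, a quotient of the rank-$\le N$ module $C_2(E;\bbQ)$, so its $G$-coinvariants have dimension at most $N$. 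The essential task is then to bound $\dim_\bbQ(\ker\rho_*)_G$: a class in $\ker\rho_*$ is a $1$-cycle $z$ in $X$ that becomes null after being pushed into $E$ and back, and the linear isoperimetric inequality in the filled model $E^{(2)}$ (valid since $X$, hence $E^{(2)}$, is hyperbolic) controls the area of a minimal filling of $\phi(z)$ linearly in $\length(z)$, while one-endedness of $X$ forces such cycles to spread across the annular structure of $X$ rather than hide in a bounded region. I expect these two inputs to combine into a double count — over pairs consisting of a $G$-orbit of a $2$-cell of $E$ used in a filling and a $G$-orbit of an edge of $X$ — yielding $\dim_\bbQ(\ker\rho_*)_G\le C_2'(X)\cdot N^2$, with the square arising precisely as the pigeonhole over such pairs. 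The main obstacle is to run this count so that the dependence on $N$ is no worse than quadratic and every remaining constant depends only on $X$ and not on $G$: naive quasi-isometry comparisons between $X$ and $E^{(1)}$ are useless here, since their constants grow with $\Vol_X(G)$ itself, so the argument must be carried out at the level of $\bbQ G$-module structure and fillings rather than metric comparison.

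Combining the stages gives $\Vol_X(G)\le C_0(X)\bigl(1+b_1(Y)\bigr)\le C_0(X)\bigl(1+2N+C_2(X)N^2\bigr)\le C(X)N^2$, and setting $\alpha(X):=C(X)^{-1/2}$ yields $\alpha(X)\cdot\sqrt{\Vol_X(G)}\le\Cx(G)$.
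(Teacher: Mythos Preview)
Your first two stages are sound, but the second one is close to tautological: since $Y$ is a graph, $H_2(Y;\bbQ)=0$, and the five-term sequence then gives an \emph{equality} $\dim_\bbQ(H_1(X;\bbQ))_G = b_2(G)+b_1(Y)-b_1(G)$. So bounding the coinvariants of $H_1(X)$ is, up to an additive $O(N)$, the same problem as bounding $b_1(Y)$. All of the content therefore sits in your ``geometric core'', and there the argument has real gaps. First, the idempotence claim is not justified: the comparison theorem applies to maps between projective resolutions, but $C_*(X)$ is not a resolution of $\bbQ$ (it has $H_1\ne 0$), so there is no reason for $\rho=\psi_*\phi_*$ to satisfy $\rho^2\simeq\rho$. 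One can show, via contractibility of $E$, that $(\phi\psi)_*$ and $\id$ differ on $H_1(E^{(1)})=B_1(E)$ by something factoring through $C_2(E)$, but that is not idempotence. Second --- and this is the decisive point --- the bound $\dim_\bbQ(\ker\rho_*)_G\le C_2'(X)\cdot N^2$ is stated only as an expectation. You correctly identify that naive quasi-isometry constants blow up with $\Vol_X(G)$, but you give no mechanism that actually uses hyperbolicity or one-endedness to run the ``double count''; the linear isoperimetric inequality bounds filling areas in $E$, not coinvariant dimensions in $X$, and it is not clear what is being counted or why the answer is $N^2$ rather than unbounded.

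The paper's argument takes a different route and does not pass through $(H_1(X))_G$. Following Rips--Sela and Delzant, it equips the presentation $2$-complex of $G$ (coming from a minimal $K(G,1)$) with a singular foliation whose leaves record slices of cylinders in $X$. One-endedness of $G$, via an accessibility argument on the dual tree, forces the number of non-singular leaves to be $O(t)$; the number of singular leaves is $O(t^2)$ because the Rips--Sela cylinders (without global stability) produce $O(t)$ exceptional slices per triangle, and there are $t$ triangles. On the other side, a uniform quasi-surjectivity statement --- proved via Bestvina--Mess boundary cohomology, with constants depending only on $X$ --- shows that the number of leaf-orbits is bounded below by a fixed multiple of $\Vol_X(G)$. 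The square thus comes from a concrete combinatorial count, and both hyperbolicity (for the cylinders) and one-endedness (for accessibility) enter at identifiable steps.
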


The next theorem provides a linear lower bound for one-ended hyperbolic spaces $X$ which admit $\Aut(X)$-globally stable cylinders \`a la Rips-Sela \cite{rips1995canonical} (see \S\ref{sec: global stability} for definitions). These include locally finite $C'(1/8)$-small cancellation polygonal complexes by Rips-Sela \cite{rips1995canonical} and hyperbolic CAT(0) cube complexes by L.-Sageev \cite{stabilityforCCC}. It is unknown whether all hyperbolic groups admit globally stable cylinders.

\begin{theoremA}\label{thm: main volume complexity}
    Let $X$ be a one-ended hyperbolic graph which admits $\Aut(X)$-globally stable cylinders. Then there exists $\alpha=\alpha(X)>0$ such that if a group $G$ acts freely and cocompactly on $X$ then $$\alpha\cdot \Vol_X(G) \le \Cx(G).$$
\end{theoremA}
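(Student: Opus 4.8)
The plan is to reduce the linear bound to the sublinear bound of Theorem~\ref{thm: main sqrt bound} by an inductive ``surgery'' argument that exploits the globally stable cylinders. Suppose $G$ acts freely and cocompactly on $X$, and let $K$ be an Eilenberg--MacLane $\Delta$-complex for $G$ realizing $\Cx(G)$. The guiding heuristic is that $X/G$ can be cut along a hierarchy of codimension-one walls: each wall comes from a cylinder $Y_{a,b}$ associated to a pair of points at infinity, and $\Aut(X)$-global stability ensures these cylinders are canonical, so their $G$-images in $X/G$ form a finite, $G$-equivariantly coherent family of embedded (quasi-convex) subcomplexes. Splitting $G$ over the stabilizers of these cylinders yields a graph-of-groups decomposition of $G$ whose vertex and edge groups act freely and cocompactly on one-ended hyperbolic graphs of strictly smaller volume (the pieces of $X$ obtained by cutting), while the number of pieces and the total volume lost across the cuts is controlled linearly by $\Vol_X(G)$.

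Concretely, the key steps, in order, would be: \textbf{(1)} Use $\Aut(X)$-global stability to fix, for each pair $a,b$ of distinct points in a chosen finite set of ``directions,'' a cylinder $Y_{a,b}\subseteq X$ that is $\Aut(X)$-equivariant, quasi-isometrically embedded, and such that any two cylinders intersect in a uniformly bounded set; pass to the induced finite collection $\calC$ of $G$-orbits of cylinders in $X/G$. \textbf{(2)} Show that cutting $X$ along (a $G$-invariant, locally finite subfamily of) these cylinders produces a tree of spaces: a Bass--Serre tree $T$ on which $G$ acts, with vertex spaces and edge spaces that are themselves one-ended (or finite/virtually cyclic in the degenerate cases) hyperbolic graphs admitting $\Aut$-globally stable cylinders, of volume strictly smaller than $\Vol_X(G)$. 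This is where the Rips--Sela JSJ machinery enters: global stability is precisely what makes the cylinders canonical enough to carry out the splitting cocompactly. \textbf{(3)} Translate the geometric splitting into a decomposition of the Eilenberg--MacLane complex: build $K$ as a graph of $\Delta$-complexes over the decomposition, so that $\Cx(G)\ge \sum_v \Cx(G_v) + \sum_e (\text{mapping cylinder cost of } G_e)$, with the edge contributions bounded below by a positive constant each (since there are at least as many edge orbits as cut-cylinder orbits, which in turn is linearly comparable to the volume removed). \textbf{(4)} Apply Theorem~\ref{thm: main sqrt bound}, or rather induct: combine the $\sqrt{\cdot}$ bounds on the pieces with the fact that there are $\gtrsim \Vol_X(G)/C$ pieces to recover a genuinely linear bound $\alpha\cdot\Vol_X(G)\le\Cx(G)$ after summing. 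The constant $\alpha$ depends only on $X$ through the quasi-isometry and intersection constants of the stable cylinders and the local geometry of $X$.

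The main obstacle I expect is \textbf{step (2)}: making the ``cut along all cylinders at once'' operation precise and showing it terminates with pieces that (a) are still one-ended hyperbolic graphs admitting globally stable cylinders, and (b) have total volume that, together with the number of cuts, sums back to something linearly comparable to $\Vol_X(G)$ without loss. The subtlety is that cylinders can cross and accumulate, so one cannot naively cut along an infinite family; one must choose a maximal ``malnormal-like'' subfamily whose $G$-orbit is locally finite, and argue that the leftover uncut part still carries enough volume to be detected by the sublinear bound. A secondary difficulty is the base case and the bookkeeping that guarantees the edge-group contributions to $\Cx(G)$ are bounded below uniformly — this requires that cutting a hyperbolic graph along a quasi-convex cylinder genuinely costs a definite number of cells in any $K(G,1)$, which should follow from a relative version of the argument behind Theorem~\ref{thm: main sqrt bound} applied to the pair (group, peripheral cylinder stabilizer).
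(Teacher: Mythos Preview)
Your proposal rests on a misunderstanding of what the ``cylinders'' are. In the Rips--Sela setup used here, a $\theta$-cylinder $C(x,y)$ is a \emph{thickened geodesic}: a subset of $X$ containing every geodesic from $x$ to $y$ and contained in the $\theta$-neighbourhood of any such geodesic. It is a quasi-line, a one-dimensional object, not a codimension-one wall. There is nothing to cut $X$ along, and the stabiliser of a cylinder (or of one of its slices, which have uniformly bounded diameter) is finite. Since $G$ is assumed one-ended, any splitting of $G$ over such finite subgroups is trivial; your step (2) produces a Bass--Serre tree with a global fixed point, so the hierarchy never gets started and the induction has no content. For the same reason, step (3) gives no lower bound: the graph of groups is a single vertex.

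The paper's argument is quite different and does not pass through Theorem~\ref{thm: main sqrt bound}. One takes a minimal $K(G,1)$ (more precisely, a minimal $(m-1)$-connected $m$-complex for $m=\dim\partial G+1$) and endows its $2$-skeleton with Delzant's \emph{singular foliation}: each edge of the presentation complex is marked by the ordered slices of the cylinder $C(\Phi(i(e)),\Phi(t(e)))$, and in each triangle one joins coincident slices by arcs. Global stability enters exactly here: it guarantees that the number of \emph{singular} (unmatched) slices per triangle is a constant $\epsilon=\epsilon(\delta,\theta,\tau)$, rather than $O(t)$ as in the general Rips--Sela construction. An accessibility argument (using one-endedness to force the Type~I dual tree to be trivial, the \emph{opposite} of your intended use) then shows the total number of leaves in the quotient foliation is $\le \omega\cdot t$. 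On the other side, a Bestvina--Mess cohomological argument shows that any continuous equivariant quasi-isometry from $R(m,G)$ to the Rips complex of $X$ is \emph{uniformly} quasi-surjective, with constant depending only on $X$; this forces every bounded ball in $X/G$ to meet a slice coming from some leaf, giving $\Vol_X(G)/\nu \le \#\{\text{leaves}\}$. Combining the two inequalities yields the linear bound directly. The improvement over Theorem~\ref{thm: main sqrt bound} is precisely the replacement of $\epsilon=O(t)$ by $\epsilon=O(1)$ in the count of Type~III leaves, not any inductive or surgical mechanism.
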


The following is an immediate application of the inequalities above to the case of finite-index subgroups.

\begin{theoremA}\label{thm: main}
Let $G$ be a torsion-free one-ended hyperbolic group. Then, there exist $\alpha,\beta>0$ such that if $H\le G$ is a finite index subgroup then $$\alpha\cdot \sqrt{[G:H]} \le \Cx(H) \le \beta \cdot [G:H].$$

If moreover $G$ admits globally stable cylinders then $$\alpha\cdot [G:H] \le \Cx(H) \le \beta \cdot [G:H].$$
\end{theoremA}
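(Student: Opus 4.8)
The plan is to derive Theorem \ref{thm: main} directly from Theorems \ref{thm: main sqrt bound} and \ref{thm: main volume complexity} by exhibiting, for a torsion-free one-ended hyperbolic group $G$, a single locally finite contractible $\Delta$-complex $X$ on which $G$ acts freely and cocompactly, and then observing that any finite-index subgroup $H \le G$ also acts freely and cocompactly on the same $X$ with $\Vol_X(H) = [G:H]\cdot \Vol_X(G)$. For the space $X$, I would take the Rips complex $P_d(G)$ of the Cayley graph of $G$ with respect to a finite generating set, for $d$ large enough (depending on the hyperbolicity constant) that $P_d(G)$ is contractible; this is the standard construction showing hyperbolic groups are of type $F_\infty$, and after subdividing we may regard it as a $\Delta$-complex. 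Since $G$ is one-ended, its Cayley graph — and hence the one-skeleton of $X$ — is a one-ended hyperbolic graph, so the hypotheses of Theorem \ref{thm: main sqrt bound} are met; if moreover $G$ admits globally stable cylinders (a property of the action $G \actson X$ via $\Aut(X)$, inherited from $G$ by the discussion in \S\ref{sec: global stability}) then the hypotheses of Theorem \ref{thm: main volume complexity} are met as well.

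Given this, the lower bounds are immediate. Set $X$ as above and apply Theorem \ref{thm: main sqrt bound} to the restricted action $H \actson X$, which is still free and cocompact. This yields a constant $\alpha_0 = \alpha_0(X) > 0$ with $\alpha_0 \sqrt{\Vol_X(H)} \le \Cx(H)$. Since the number of $H$-orbits of cells is $[G:H]$ times the number of $G$-orbits of cells, we have $\Vol_X(H) = [G:H] \cdot \Vol_X(G)$, so setting $\alpha := \alpha_0 \sqrt{\Vol_X(G)}$ gives $\alpha \sqrt{[G:H]} \le \Cx(H)$. In the globally-stable-cylinders case one argues identically with Theorem \ref{thm: main volume complexity} in place of Theorem \ref{thm: main sqrt bound}, obtaining $\alpha [G:H] \le \Cx(H)$ with $\alpha := \alpha_0 \cdot \Vol_X(G)$.

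For the upper bound, I would use that $X/G$ is a finite Eilenberg-MacLane $\Delta$-complex for $G$ (as $X$ is contractible and the $G$-action is free), so $K_H := X/H$ is a finite Eilenberg-MacLane $\Delta$-complex for $H$ with exactly $[G:H] \cdot |\text{cells of } X/G|$ cells. Taking $\beta := |\text{cells of } X/G| = \Vol_X(G)$, this gives $\Cx(H) \le \beta \cdot [G:H]$ by definition of topological complexity as the minimum over all such complexes. This finishes both displayed chains of inequalities, with all constants depending only on $G$ (through the chosen $X$) and not on $H$.

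The only real point requiring care — and the step I expect to be the main (if modest) obstacle — is verifying that the chosen model $X$ genuinely satisfies the hypotheses of the quoted theorems: that for suitable $d$ the Rips complex $P_d(G)$ is contractible and can be given a $G$-invariant $\Delta$-complex structure with locally finite, one-ended, hyperbolic one-skeleton, and that the globally-stable-cylinders hypothesis transfers to the $\Aut(X)$-action on this $X$. The contractibility and type-$F_\infty$ facts are standard (see e.g. Bridson–Haefliger), and one-endedness and hyperbolicity of the one-skeleton follow because it is quasi-isometric to $G$; the globally-stable-cylinders transfer is handled by the definitions in \S\ref{sec: global stability}, where the relevant examples ($C'(1/8)$ complexes, hyperbolic cube complexes) are exactly those for which a $\Delta$-complex model with the required property is available. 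Once $X$ is fixed, everything else is bookkeeping with orbit counts.
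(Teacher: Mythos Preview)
Your proposal is correct and follows essentially the same route as the paper: fix a contractible locally finite $\Delta$-complex on which $G$ acts freely and cocompactly (the paper uses $R_G^{(1)}$ from an $R(m,G)$, you use the Rips complex), observe that $\Vol_X(H)=[G:H]\cdot\Vol_X(G)$ for any finite-index $H$, and then invoke Theorems \ref{thm: main sqrt bound} and \ref{thm: main volume complexity} for the lower bounds and the tautology $\Cx(H)\le \Vol_X(H)$ for the upper bound. The paper routes the argument through the refined statement Theorem \ref{thm: main upgraded} (with $\Cx_{2,m}$ in place of $\Cx$) before specializing, but the content is the same bookkeeping with orbit counts that you describe.
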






    Note that $\Cx(G)<\infty$ if and only if $G$ is of type $F$. For hyperbolic groups this happens if and only if $G$ is torsion free. 
More refined versions of the above theorems for hyperbolic groups with torsion appear in \S\ref{sec: proof of thm A}.

The proofs of Theorems \ref{thm: main sqrt bound}, \ref{thm: main volume complexity}, and \ref{thm: main}  appear in \S\ref{sec: proof of thm A}. We sketch the main idea here.

\emph{Sketch of proof of Theorem \ref{thm: main volume complexity}.}
Let $K$ be a the $K(G,1)$ with the minimal number of cells. 
We may assume that $X$ is the 1-skeleton of a contractible finite dimensional complex (namely, its Rips complex).
The free action of $G\actson X$ endows the 2-skeleton of $K$ with a singular foliation by works of Delzant \cite{delzant1995image} and Rips-Sela \cite{rips1995canonical}. 
The details of this foliation are described in \S\ref{sec: global stability} and \S\ref{sec: the singular foliation}.
In Lemma \ref{lem: accessibility upper bound}, we provide an upper bound, that depends linearly on the number of 2-cells in $K$, for the number of leaves in this foliation.
On the other hand, the action $G \actson X$ induces a quasi-isometry between $G$ and $X$.
Using Bestvina-Mess \cite{bestvina1991boundary}, we show in Proposition \ref{prop: uniform quasi-onto} that any continuous quasi-isometry between Rips complexes of hyperbolic groups must be \emph{uniformly} quasi-surjective. 
Proposition \ref{prop: uniform quasi-onto} is used to get a lower bound, that depends linearly on $\Vol_{X}(H)$, on the number of leaves of the foliation.
Combining the two inequalities we get the desired result.

\paragraph{Isomorphic finite-index subgroups.} 
We now turn to an application of Theorem \ref{thm: main} to the following question. 
\begin{question}\label{q: hyperbolic groups have isomorphic finite index subgroups}
Let $G$ be a non-elementary hyperbolic group. Can $G$ contain two isomorphic finite index subgroups of different indices?
\end{question}

Note that this is possible for $G\simeq \bbZ$, so the assumption that $G$ is non-elementary is necessary for a negative answer. 
By Sela \cite{sela1997structure} torsion-free one-ended hyperbolic groups are co-Hopfian, so the answer is negative if one of the subgroups is equal to $G$. 
The answer is negative if the group has non-zero Euler characteristic, e.g free groups (of rank $\ge 2$) and (hyperbolic) surface groups.
By Mostow Rigidity \cite{mostow1968quasi}, it is also negative for hyperbolic manifold groups since the volume is an isomorphism invariant and depends linearly on the index.
By Sykiotis \cite{sykiotis2018complexity}, the answer is negative if $G$ has a JSJ decomposition with a maximal hanging Fuchsian group.
Surprisingly, Stark-Woodhouse \cite{stark2018hyperbolic} found examples of one-ended hyperbolic groups with isomorphic finite-index and infinite-index subgroups. 

We first answer Question \ref{q: hyperbolic groups have isomorphic finite index subgroups} in the case $G$ has more than two ends. 
\begin{theoremA}\label{thm: free splitting}
Let $G$ be a finitely presented group, then either $G$ has at most 2 ends or $G$ does not contain two isomorphic subgroups of different indices.
\end{theoremA}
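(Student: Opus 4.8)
The plan is to attach to each group a rational number that is (i) an isomorphism invariant, (ii) scaled by the index $[G:H]$ on passing to a finite-index subgroup $H$, and (iii) strictly positive for every finitely generated group with infinitely many ends (and finite whenever the group is finitely presented); the first $\ell^2$-Betti number $b_1^{(2)}$ does exactly this. Granting (i)--(iii), the theorem follows at once: a finitely generated group has $0$, $1$, $2$, or infinitely many ends, so ``more than two ends'' means infinitely many ends, whence $b_1^{(2)}(G)>0$; and if $H_1\cong H_2\le G$ are finite-index subgroups, then by (ii) and (i) $[G:H_1]\,b_1^{(2)}(G)=b_1^{(2)}(H_1)=b_1^{(2)}(H_2)=[G:H_2]\,b_1^{(2)}(G)$, so $[G:H_1]=[G:H_2]$ after cancelling the nonzero finite number $b_1^{(2)}(G)$ --- finite presentability being exactly what guarantees $b_1^{(2)}(G)<\infty$. (When $G$ happens to be virtually free one could instead use the rational Euler characteristic, but $b_1^{(2)}$ treats all cases uniformly.)

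Properties (i) and (ii) are standard facts about $\ell^2$-Betti numbers: isomorphism invariance is immediate, and multiplicativity under finite-index subgroups is the restriction principle, which holds for arbitrary groups. So the real content is (iii), which I would establish as follows. Since $G$ is finitely generated with at least two ends, Stallings' theorem presents $G$ as an amalgamated product $A*_C B$ or an HNN extension over a finite subgroup $C$. Feeding this splitting into the (weakly exact) $\ell^2$-Mayer--Vietoris sequence and using that a finite group has vanishing $\ell^2$-homology in positive degrees, one obtains an explicit formula; in the amalgam case it reads $b_1^{(2)}(G)=b_1^{(2)}(A)+b_1^{(2)}(B)+\tfrac1{|C|}-b_0^{(2)}(A)-b_0^{(2)}(B)$, and the HNN case is analogous. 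Since $b_0^{(2)}$ vanishes for an infinite group and equals $1/|F|$ for a finite group $F$, a short case analysis on whether $A$ and $B$ are finite or infinite shows the right-hand side is always $\ge 0$ and vanishes only when $A$ and $B$ are both finite with $[A:C]=[B:C]=2$ (or in the corresponding degenerate HNN case) --- but then $G$ is virtually cyclic, contradicting infinitely many ends. Hence $b_1^{(2)}(G)>0$.

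The step I expect to take the most care is precisely this last one: checking that the Mayer--Vietoris formula can only yield $b_1^{(2)}(G)=0$ when the splitting already exhibits $G$ as virtually $\bbZ$, and, in passing, that the versions of $\ell^2$-Mayer--Vietoris and of the restriction principle being used require no $FP_\infty$-type finiteness on the groups involved (they do not). Everything else --- reducing ``more than two ends'' to ``infinitely many ends'', and the concluding arithmetic --- is formal. If one wanted to bypass $\ell^2$-invariants, finite presentability together with Dunwoody's accessibility gives a finite decomposition of $G$ over finite subgroups whose one-ended vertex groups form a canonical (up to isomorphism) multiset, and one could try to run a Grushko/Euler-characteristic count instead; but torsion and freely indecomposable one-ended pieces make that route noticeably more delicate, so the $\ell^2$ path is the one I would follow.
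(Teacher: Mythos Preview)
Your proof is correct and takes a genuinely different route from the paper. The paper follows precisely the alternative you sketch at the end and then set aside: it invokes Dunwoody accessibility to obtain a maximal splitting $\calG$ of $G$ over finite subgroups, defines $\chi(\calG)=\sum_v 1/|G_v|-\sum_e 1/|G_e|$ (with $1/\infty=0$), proves multiplicativity in the index by a double-coset count, and---the most delicate step---shows $\chi$ is independent of the chosen maximal splitting via a case analysis of Stallings folds. A local computation with $\chi_+(v)=1/|G_v|-\tfrac12\sum_{e\ni v}1/|G_e|$ then gives $\chi(G)<0$ unless $G$ is virtually cyclic. Your argument replaces this bespoke invariant with $b_1^{(2)}$, which is an isomorphism invariant for free and multiplicative by the restriction principle, so the well-definedness lemma disappears entirely; and a single application of Stallings' theorem plus $\ell^2$-Mayer--Vietoris suffices, with no need for full accessibility. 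The trade-off is that you import the $\ell^2$-machinery (weak exactness, Lück's extended Betti numbers), whereas the paper's argument is elementary once accessibility is granted. One small correction: finite generation already forces $b_1^{(2)}(G)<\infty$ (bounded by the number of generators), so your argument in fact proves the theorem under the weaker hypothesis that $G$ is finitely generated; finite presentation is not ``exactly'' what is needed here.
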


If $G$ is virtually torsion free, Theorem \ref{thm: free splitting} is a corollary of \cite{sykiotis2018complexity,tsouvalas2018euler}. Its proof is independent of the rest of the paper and occupies \S\ref{sec: multiended}.

Finally we answer the question under the additional assumption that $G$ admits globally stable cylinders (e.g cubulated).
\begin{theoremA}\label{thm: isomorphic finite index}
Let $G$ be a non-elementary hyperbolic group that admits globally stable cylinders, then $G$ does not contain two isomorphic finite index subgroups of different indices.
\end{theoremA}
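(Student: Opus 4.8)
The plan is to reduce to Theorem~\ref{thm: main} via a sequence of standard reductions that isolate a torsion-free one-ended hyperbolic core. Suppose, for contradiction, that $H_1, H_2 \le G$ are isomorphic finite-index subgroups with $[G:H_1] < [G:H_2]$. First I would pass to a torsion-free finite-index subgroup: since $G$ is hyperbolic, it is virtually torsion free, and in fact residually finite, so by intersecting with a suitable normal finite-index torsion-free subgroup $N \normalin G$ and replacing $G$ by $N$ I may assume $G$ is torsion free (one must check this reduction preserves the existence of an isomorphic pair of subgroups of distinct index: given $\varphi\colon H_1 \xrightarrow{\sim} H_2$, set $H_1' = H_1 \cap N$ and $H_2' = \varphi(H_1') \cap N$; these are isomorphic, but one must arrange that their indices in $G$ still differ, which can be done by first replacing $H_1'$ with $H_1' \cap \varphi\ii(N)\cap N$ and chasing indices — this bookkeeping is the one slightly delicate point).

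Next I would dispose of the small-ended cases. If $G$ has $0$ or $2$ ends it is elementary, contrary to hypothesis; if $G$ has infinitely many ends, Theorem~\ref{thm: free splitting} (applied to the finitely presented group $G$) immediately gives the conclusion. So I may assume $G$ is one-ended. Now $G$ is a torsion-free one-ended hyperbolic group admitting globally stable cylinders, and so is every finite-index subgroup (global stability of cylinders is inherited by the action of a finite-index subgroup on the same complex $X$ — this is where the hypothesis is genuinely used and propagated). Apply the second inequality of Theorem~\ref{thm: main}: there are constants $\alpha, \beta > 0$, depending only on $G$, such that for every finite-index $H \le G$,
\[
\alpha \cdot [G:H] \;\le\; \Cx(H) \;\le\; \beta \cdot [G:H].
\]
Apply this to $H_1$ and $H_2$. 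Since $H_1 \cong H_2$, their topological complexities are equal: $\Cx(H_1) = \Cx(H_2)$. Hence
\[
\alpha \cdot [G:H_2] \;\le\; \Cx(H_2) \;=\; \Cx(H_1) \;\le\; \beta \cdot [G:H_1],
\]
which forces $[G:H_2] \le (\beta/\alpha)\,[G:H_1]$. This bounds the ratio of indices but does not yet yield a contradiction, so this naive application is not enough by itself.

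To upgrade the bounded ratio into an actual contradiction, I would iterate. Since $\varphi\colon H_1 \xrightarrow{\sim} H_2$ is an isomorphism and $H_2$ has finite index in $G$, the composite inclusions produce a descending chain $G \ge H_2 \ge \varphi(H_2) \ge \varphi^2(H_2) \ge \cdots$, where $\varphi^k(H_2)$ is the image of $H_2$ under $k$ applications of (an extension of) $\varphi$ — more carefully, one works inside $G$ with the subgroups $K_k$ defined so that $K_0 = H_1$, $K_1 = H_2$, and each $K_{k+1} \le K_k$ is the copy of $H_2$ inside the isomorphic copy $K_k$ of $H_1$; then $[K_k : K_{k+1}] = [H_1:H_2] =: d > 1$ for all $k$, so $[G : K_k] = [G:H_1]\cdot d^{\,k} \to \infty$. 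But all the $K_k$ are isomorphic to one another (each is obtained from the previous by an isomorphism), hence $\Cx(K_k)$ is a constant independent of $k$; this contradicts the lower bound $\alpha\cdot[G:K_k] \le \Cx(K_k)$ once $d^k$ is large. The main obstacle I anticipate is precisely setting up this iteration correctly — ensuring the nested copies are genuinely isomorphic as abstract groups while tracking indices multiplicatively — together with the first reduction's index bookkeeping when passing to the torsion-free subgroup; the inequality input from Theorem~\ref{thm: main} itself is then applied verbatim.
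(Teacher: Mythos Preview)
Your overall strategy matches the paper's --- reduce to the one-ended case via Theorem~\ref{thm: free splitting}, apply the linear complexity bounds, observe this only bounds the index ratio, and then amplify that ratio by iterating the isomorphism --- but both of your reductions have genuine gaps.

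First, the torsion-free reduction relies on the assertion that hyperbolic groups are virtually torsion free (and residually finite). This is a well-known open problem; you cannot assume it, and ``admits globally stable cylinders'' is not known to help. The paper sidesteps this entirely: it never passes to a torsion-free subgroup, and instead invokes Theorem~\ref{thm: main upgraded}, which uses the complexity $\Cx_{2,m}$ (defined for groups of type $F_m$, hence for all hyperbolic groups) in place of $\Cx$. Since $\Cx_{2,m}$ is an isomorphism invariant, the sandwich $\alpha[G:H]\le \Cx_{2,m}(H)\le\beta[G:H]$ does the same work without any torsion hypothesis.

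Second, your iteration is not well defined as written: you speak of $[H_1:H_2]$ and ``the copy of $H_2$ inside the isomorphic copy $K_k$ of $H_1$'', but $H_2$ need not be contained in $H_1$, so there is no injective endomorphism to iterate and no nested chain of isomorphic subgroups. You correctly flagged this as the delicate point; the paper's resolution (Lemma~\ref{lem: arbitrary indices}) is to view $\varphi$ as an element of the abstract commensurator $\Comm(G)$ and take powers there. The $n$-th power $\varphi^n$ is an isomorphism $A_n\to B_n$ between finite-index subgroups of $G$, and an index chase through the resulting lattice of intersections shows $[G:B_n]/[G:A_n]\to\infty$. Choosing $n$ so that this ratio exceeds $\beta/\alpha$ then contradicts $\alpha[G:B_n]\le \Cx_{2,m}(B_n)=\Cx_{2,m}(A_n)\le\beta[G:A_n]$.
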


The proof of Theorem \ref{thm: isomorphic finite index} occupies \S\ref{sec: abstract comm}, and uses a bootstrapping argument, Lemma \ref{lem: arbitrary indices}, that shows that if $G$ contains two isomorphic finite index subgroups of different indices then one can assume that the ratio of their indices is arbitrarily large.





\paragraph{Related results and open questions.} \label{subsec: discussion}
We end this introduction with a discussion of related results and open questions. Theorem \ref{thm: main volume complexity} assumes that $G$ admits globally stable cylinders to get a linear lower bound.
\begin{question}[Rips-Sela]
    Do all hyperbolic groups admit globally stable cylinders?
\end{question}
\begin{question}
    Does the linear lower bound in Theorem \ref{thm: main volume complexity} hold for all non-elementary hyperbolic groups? 
\end{question}

Let $X$ be a locally finite $\Delta$-complex. If $G$ acts freely and cocompactly on $X$ then we can think of $G$ as a uniform lattice in $\Aut(X)$. Upon fixing a Haar measure on $\Aut(X)$, we can define the co-volume $\covol(G)=\Vol(\Aut(X)/G)$. It is easy to see that $\covol(G)\asymp_X \Vol_X(G)$ for every group $G$ acting freely and cocompactly on $X$. 
Thus, one can study the relation between measures of complexity of lattices and their co-volumes.
In the setting of non-positive curvature we ask:
\begin{question}
    Let $X$ be a geodesically complete CAT(0) space without Euclidean factors, does $\covol(G)\prec \Cx(G)$ hold for torsion-free uniform lattices $G$ in $\Isom(X)$?
\end{question}

There are other natural notions of complexity for groups. The most basic one is $\rank(G)$ -- the minimal number of generators. As the following example shows Theorem \ref{thm: main} and hence Theorems \ref{thm: main sqrt bound} and \ref{thm: main volume complexity} are false when replacing $\Cx(G)$ by $\rank(G)$.
\begin{example}
 Let $G$ be the fundamental group of a closed hyperbolic fibered 3-manifold group. That is, $G = S \rtimes \gen{\phi}$ where $S$ is a (closed) surface group, and $\phi\actson S$ is a pseudo-Anosov mapping class. Then the subgroups $G_n=S\rtimes \gen{\phi^n}\le G$ have $[G:G_n]=n$ but $\rank(G_n)\le \rank(S)+1$.
\end{example} 

The same example shows that the minimal number of relations in a presentation will also not be bounded from below. Perhaps the next natural notion of complexity is Delzant's T-invariant \cite{delzant1996decomposition} defined to be the minimal number $T(G)$ of relations of length 3 in a presentation for $G$ in which all relations have length $\le 3$.
Cooper \cite{cooper1999volume} showed that for a hyperbolic 3-manifold $M$ one has  $\Vol(M)\le \pi\cdot  T(\pi_1(M))$. 

\begin{question} 
    Can $\Cx(G)$ be replaced by $T(G)$ in Theorems \ref{thm: main sqrt bound}, \ref{thm: main volume complexity} and \ref{thm: main}? (cf. Theorems \ref{thm: main volume complexity upgraded} and \ref{thm: main upgraded}.)
\end{question}

As the following example shows, one should not expect the volume to bound $\Cx(G)$ or $T(G)$ from above in the non-discrete setting.
\begin{example}[Thurston \cite{thurston1979geometry}]\label{ex: Dehn fillings}
 Let $M$ be a cusped hyperbolic 3-manifold. By Dehn filling $M$, one can get non-isomoprhic hyperbolic 3-manifolds $M_i$ with $\Vol(M_i)<\Vol(M)$. Since $M_i$ are non-isomorphic, by Mostow Rigidity \cite{mostow1968quasi} they have non-isomorphic fundamental groups $G_i=\pi_1(M_i)$, which thus have arbitrarily large $\Cx(G_i)$ and $T(G_i)$.
\end{example}

With this exception in mind, Gelander \cite[Conjecture 1.3]{gelander2004homotopy} conjectures that if $G$ acts cocompactly and freely on a non-compact symmetric space $X$ (and $G$ is arithmetic if $\dim(X)=3$), then a minimal Eilenberg-Maclane simplicial complex for $G$ with bounded vertex degree (that depends only on $X$) has at most $\beta\cdot \Vol(X/G)$ vertices. The conjecture holds for non-uniform arithmetic lattices \cite{gelander2004homotopy}, and a slightly weaker bound was recently obtained in \cite{gelander2021bounds}.
Other measures of complexity that are bounded from above by the co-volume include: Betti numbers of non-positively curved manifolds \cite{ballmann1985manifolds}; $\rank(G)$ for lattices in various symmetric spaces \cite{belolipetsky2010counting, gelander2019minimal,gelander2011volume}; $\log(|Tor(H_k(G;\bbZ)|)$  for manifolds with normalized bounded negative curvature\cite{bader2020homology};  a relative T-invariant for hyperbolic 3-manifolds \cite{delzant2013complexity}.

Regarding Question \ref{q: hyperbolic groups have isomorphic finite index subgroups} and Theorems \ref{thm: free splitting} and \ref{thm: isomorphic finite index} we ask the following.
\begin{question}
Which groups have isomorphic finite index subgroups of different indices?
\end{question}
In particular, such groups include the class of \emph{finitely non-co-Hopfian groups} defined in \cite{bridson2010cofinitely}, i.e groups that are isomorphic to proper finite-index of themselves. Some results and conjectures highlight ``nilpotent features'' in finitely non-co-Hopfian groups (see \cite{van2017structure,nekrashevych2008scale}), can similar phenomena occur for groups with isomorphic finite index subgroups of different indices?

\paragraph{Acknowledgements.} The author would like to thank  Alex Margolis and Michah Sageev for their helpful suggestions and improvements of the results and the exposition, and Uri Bader, Mladen Bestvina, Tsachik Gelander, Yair Glasner, Zlil Sela and Daniel Woodhouse for valuable conversations and comments on the manuscript. 

\section{Cylinders, slices and global stability}\label{sec: global stability}
In this section we recall (and slightly adapt) the definition and properties of cylinders, slices and global stability from Rips-Sela \cite{rips1995canonical}.

Let $X$ be a graph. Let $d$ be the minimal path metric on the vertices of $X$.
Assume $X$ is $\delta$-hyperbolic.

\paragraph{Cylinders and slices.}
A $\theta$-\emph{cylinder} from $x$ to $y$ is the triple $(C,x,y)$ where $x,y\in X$ and $C$ is a subset that satisfies $\gamma \subseteq C\subseteq N_{\theta}(\gamma)$ for every geodesic $\gamma$ connecting $x,y$.
By abuse of notation we will denote cylinders simply by $C$ and regard $x,y$ as implicit.
For a point $u\in C$ we set $$ L_C(u) = \{w\in C \;|\;d(w,x)\le d(u,x)\text{ and }d(u,w)\ge 5\theta\}$$ and similarly $$R_C(u) = \{w\in C \;|\;d(w,y)\le d(u,y)\text{ and }d(u,w)\ge 5\theta \}.$$
The \emph{difference} $\diff_C(u,v)$ between $u,v\in C$  is defined by
\begin{multline*}
    \diff_{C}(u,v) = |L_C(u)- L_C(v)| - |L_C(v)- L_C(u)|\\ +|R_C(v)-R_C(u)|-|R_C(u)-R_C(v)|.
\end{multline*}

The following lemma summarizes the main properties of the difference function.  \cite{delzant1995image}]
\begin{lemma}[Lemma 3.4 in \cite{rips1995canonical}]\label{lem: properties of difference}
 If $C$ is a $\theta$-cylinder from $x$ to $y$ then for all $u,v,w\in C$:
\begin{enumerate}[label=(\arabic*)]
    \item if $\bar{C}=(C,y,x)$ denotes the reverse $\theta$-cylinder from $y$ to $x$ then $\diff_{\bar{C}} (u,v)= -\diff_C(u,v)$. 
    \item $\diff_C(u,v)+\diff_C(v,w)=\diff_C(u,w)$ and $\diff_C(u,v)=-\diff_C(v,u)$.\qed
\end{enumerate}
\end{lemma}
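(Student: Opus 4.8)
The plan is to observe that the four-term combinatorial expression defining $\diff_C$ is, despite appearances, nothing more than the difference of the values at $u$ and $v$ of a single integer-valued function on $C$; once that is in place, all three assertions are formal.

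First I would record the elementary identity
\[
|A\setminus B|-|B\setminus A|=|A|-|B|
\]
valid for any finite sets $A,B$ (both sides equal $\bigl(|A|-|A\cap B|\bigr)-\bigl(|B|-|A\cap B|\bigr)$). The sets in play are indeed finite: a $\theta$-cylinder $C$ from $x$ to $y$ satisfies $C\subseteq N_\theta(\gamma)$ for a geodesic $\gamma$ of length $d(x,y)$, so $C$, and hence each $L_C(u)$ and $R_C(u)$, is finite since $X$ is locally finite. Applying the identity with $(A,B)=(L_C(u),L_C(v))$ to the first two terms and with $(A,B)=(R_C(v),R_C(u))$ to the last two, the definition of $\diff_C$ collapses to
\[
\diff_C(u,v)=\bigl(|L_C(u)|-|R_C(u)|\bigr)-\bigl(|L_C(v)|-|R_C(v)|\bigr).
\]
Setting $f_C(u):=|L_C(u)|-|R_C(u)|$, this says precisely $\diff_C(u,v)=f_C(u)-f_C(v)$.

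From here part (2) is immediate: additivity is the telescoping identity $\bigl(f_C(u)-f_C(v)\bigr)+\bigl(f_C(v)-f_C(w)\bigr)=f_C(u)-f_C(w)$, and antisymmetry is $f_C(u)-f_C(v)=-\bigl(f_C(v)-f_C(u)\bigr)$. For part (1), I would note that replacing $C=(C,x,y)$ by the reverse cylinder $\bar C=(C,y,x)$ simply swaps the roles of the two distinguished endpoints in the definitions of the sets $L$ and $R$, so that $L_{\bar C}(u)=R_C(u)$ and $R_{\bar C}(u)=L_C(u)$ for every $u\in C$; consequently $f_{\bar C}=-f_C$, whence $\diff_{\bar C}(u,v)=f_{\bar C}(u)-f_{\bar C}(v)=-f_C(u)+f_C(v)=-\diff_C(u,v)$.

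I do not expect a genuine obstacle here. The only point deserving attention is the finiteness of $L_C(u)$ and $R_C(u)$, which is what makes the cardinalities and the set-difference identity legitimate and is the sole place the standing local finiteness of $X$ is used; granting this, the entire lemma is bookkeeping around the function $f_C$.
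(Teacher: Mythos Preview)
Your argument is correct. The key observation---that the identity $|A\setminus B|-|B\setminus A|=|A|-|B|$ collapses $\diff_C(u,v)$ to $f_C(u)-f_C(v)$ for $f_C(u)=|L_C(u)|-|R_C(u)|$---is exactly the right way to see the lemma, and the finiteness of $C$ (hence of $L_C(u)$, $R_C(u)$) via local finiteness is the only genuine hypothesis to check, which you handle correctly.

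There is nothing to compare against: the paper states this lemma with a \qed and a citation to Rips--Sela \cite{rips1995canonical} but gives no proof of its own. Your write-up is in fact the standard verification (and essentially what one finds in the cited source), so it could serve as the omitted proof without change.
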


It follows that the relation $u\sim v\in C$ if $\diff_C(u,v)=0$ is an equivalence relation. The equivalence classes of this relation are called \emph{slices} and are denoted by $[u]_C$. The relation $[x]_C\prec[y]_C$ if $\diff(x,y)<0$ is well-defined and gives a well-ordering of the slices of $C$.

\begin{lemma}[Proposition 3.6 in \cite{rips1995canonical} and Lemme I.1 in \cite{delzant1995image}]\label{lem: properties of slices}
If $C$ is a $\theta$-cylinder from $x$ to $y$ then for all $u,v,w\in C$:
\begin{enumerate}[label=(\arabic*)]
    \item \label{lem: properties of slices - bounded slices} The slices have a bounded diameter, namely 
        \begin{equation*}
            \diam([u]_C) \le 10\theta.
        \end{equation*} 
    \item \label{lem: properties of slices - q.i of slices} for every two slices $[u]_C\prec[v]_C$ we have
        \begin{equation*}
            \frac{1}{10\theta} \#\{ [w]_C \;|\; [u]_C \preceq [w]_C \prec [v]_C \} \le d(u,v).\qed
        \end{equation*}
\end{enumerate}
\end{lemma}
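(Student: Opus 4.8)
The plan is to reduce everything to the following reformulation of $\diff_C$. For finite sets $A,B$ one has $|A\setminus B|-|B\setminus A|=|A|-|B|$, hence (all the cardinalities being finite, as in Lemma~\ref{lem: properties of difference})
\[
\diff_C(u,v)=\bigl(|L_C(u)|-|L_C(v)|\bigr)+\bigl(|R_C(v)|-|R_C(u)|\bigr)=g(u)-g(v),
\]
where $g(w):=|L_C(w)|-|R_C(w)|\in\bbZ$. This instantly re-proves Lemma~\ref{lem: properties of difference} and shows that $[u]_C\prec[v]_C\iff g(u)<g(v)$, so $g$ descends to a strictly order-preserving injection from the set of slices of $C$ into $\bbZ$; moreover $g$ makes sense verbatim for every $z\in X$, not only $z\in C$, which I use below.

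For part~\ref{lem: properties of slices - bounded slices} I would prove the contrapositive. Using $\delta$-thinness and $C\subseteq N_\theta(\gamma)$, two points of $C$ equidistant from $x$ lie within $O(\theta)$; so, assuming $d(u,v)$ is larger than a fixed multiple of $\theta$, one may take $d(u,x)<d(v,x)$, and then $d(v,x)-d(u,x)$ is also comparably large. Nearest-point projection of $w\in L_C(u)$ onto $\gamma$ shows that $d(w,v)<5\theta$ would force $d(w,x)>d(u,x)$, a contradiction; hence $L_C(u)\subseteq L_C(v)$, and symmetrically $|R_C(v)|\le|R_C(u)|$. Since $u\in L_C(v)\setminus L_C(u)$, the identity above gives $\diff_C(u,v)\le-1<0$, so $u\not\sim v$. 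Pinning the numerical constants down to the stated $10\theta$ is routine thin-triangle bookkeeping, carried out exactly as in \cite{rips1995canonical,delzant1995image}.

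For part~\ref{lem: properties of slices - q.i of slices}, extend $g$ to all of $X$ and let $\phi(z):=\#\{\text{slices }[w]_C:g([w]_C)\le g(z)\}$, a finite ``rank'' function. Because $g$ is injective on slices, $\#\{[w]_C:[u]_C\preceq[w]_C\prec[v]_C\}=\phi(v)-\phi(u)$. Choosing a geodesic $u=z_0,z_1,\dots,z_m=v$ in $X$ with $m=d(u,v)$ and telescoping $\phi(v)-\phi(u)=\sum_i(\phi(z_{i+1})-\phi(z_i))$, it suffices to prove
\[
|\phi(z)-\phi(z')|\le 10\theta\qquad\text{whenever }d(z,z')=1 .
\]
For this I would first establish one-sided versions of the part~\ref{lem: properties of slices - bounded slices} estimate valid for $w\in C$, $z\in X$: $g([w]_C)\le g(z)\Rightarrow d(w,x)\le d(z,x)+10\theta$, and (applying the same to the reverse cylinder $\overline C$, for which $g_{\overline C}=-g_C$) $g([w]_C)\ge g(z)\Rightarrow d(w,y)\le d(z,y)+10\theta$. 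Together with $d(z,z')=1$ these show that every slice counted by $\phi(z)-\phi(z')$ has a representative at distance $O(\theta)$ from the edge $zz'$; one then bounds the number of distinct slices with a representative in such a thin region by $10\theta$, using the bounded-diameter statement of part~\ref{lem: properties of slices - bounded slices}.

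The main obstacle is exactly this last local estimate. The raw values $g(z),g(z')$ at adjacent vertices may differ by a large amount — comparable to the size of a cross-section of $C$ — so the point is that only $O(\theta)$ of the slice-positions that actually occur fall in between. Extracting the clean constant $10\theta$ requires combining $\delta$-hyperbolicity with the bounded-diameter of slices carefully; this is the technical core of \cite{rips1995canonical} and \cite{delzant1995image}.
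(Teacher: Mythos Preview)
The paper does not prove this lemma at all: note the \qed at the end of the statement. It is quoted verbatim from Rips--Sela and Delzant, and the paper uses it as a black box. So there is no in-paper proof to compare against.

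On your sketch itself: the identity $\diff_C(u,v)=g(u)-g(v)$ with $g(w)=|L_C(w)|-|R_C(w)|$ is correct and is the right way to organise the argument. Your outline for part~\ref{lem: properties of slices - bounded slices} is essentially the standard one and goes through; the inclusion $L_C(u)\subseteq L_C(v)$ when $d(v,x)-d(u,x)>5\theta$ follows exactly as you indicate, and the symmetric statement for $R_C$ uses that $w\in C$ forces $d(w,x)+d(w,y)\le d(x,y)+2\theta$.

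For part~\ref{lem: properties of slices - q.i of slices} there is a real gap. Your one-sided estimates for $w\in C$, $z\in X$ are in fact correct (the $R$-side inclusion $R_C(w)\subseteq R_C(z)$ uses $d(w,x)+d(w,y)\le d(x,y)+2\theta\le d(z,x)+d(z,y)+2\theta$), and they do confine every slice counted by $\phi(z)-\phi(z')$ to a region whose projection to $\gamma$ has length $O(\theta)$ --- but you have not explained why the \emph{number of slices} meeting such a region is $O(\theta)$. Distinct slices are distinguished by their $g$-values, not by $d(\cdot,x)$, and many $g$-values can occur in a bounded neighbourhood. Bounding this is precisely the content you defer to \cite{rips1995canonical,delzant1995image}, which is circular: those are the references the lemma is being quoted from. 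If you want to give a self-contained proof, this is the step that needs an actual argument (for instance, showing that the slices in question all meet a subsegment of $\gamma$ of length $O(\theta)$, and that each vertex of $\gamma$ lies in a unique slice).
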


The next lemma shows that when two cylinders coincide on a large ball their slices coincide, and the order $\prec$ is preserved.

\begin{lemma}[Proposition 3.6 in \cite{rips1995canonical} and Lemme I.1 in \cite{delzant1995image}]\label{lem: stability of slices}
 If $C,C'$ are $\theta$-cylinders between $x,y$ and $x',y'$ respectively, for all $R$, if $w\in C\cap C'$ and $C\cap B_w(R+20\theta)=C'\cap B_w(R+20\theta)$ then 
    \begin{enumerate}[label=(\arabic*)]
        \item (stability of the difference) $\diff_C(u,v)=\diff_{C'}(u,v)$ for all $u,v\in C\cap B_w(R)=C'\cap B_w(R)$.
        \item (stability of slices) in particular $[u]_C=[u]_{C'}$ if $u\in C\cap B_w(R)=C'\cap B_w(R)$.
        \item (stability of the ordering) If in addition $x=x'$ then for all $u,v\in C\cap C'$ if $[u]_C\prec [v]_C$ in $C$ and $[u]_C=[u]_{C'}$ and $[v]_C=[v]_{C'}$ then $[u]_{C'}\prec [v]_{C'}$ in $C'$.\qed
    \end{enumerate}
\end{lemma}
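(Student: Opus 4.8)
\emph{Proof plan.} Items (2) and (3) I would deduce from (1), which carries all the content, so the plan is to prove (1) first. The crux of (1) is that $\diff_C(u,v)$ is \emph{local}: it should depend only on $C$ intersected with a bounded neighbourhood of an arbitrary geodesic $[u,v]$. Precisely, I would show that each of the four sets $L_C(u)\setminus L_C(v)$, $L_C(v)\setminus L_C(u)$, $R_C(u)\setminus R_C(v)$, $R_C(v)\setminus R_C(u)$ is contained in $C\cap N_{10\theta}([u,v])$ (we may assume $\theta\ge\delta$, as in \cite{rips1995canonical}). To prove this, fix a geodesic $\gamma$ from $x$ to $y$; since $u$, $v$ and every point of $C$ lie within $\theta$ of $\gamma$, the value of $d(\,\cdot\,,x)$ at each of $u,v,w$ agrees up to an additive $\theta$ with the arc-length parameter of its nearest point on $\gamma$. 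Now take $w\in L_C(u)\setminus L_C(v)$: from $w\notin L_C(v)$, either $d(v,w)<5\theta$, so $w\in B_v(5\theta)\subseteq N_{5\theta}([u,v])$, or $d(v,x)<d(w,x)\le d(u,x)$, in which case the nearest-point parameter of $w$ lies within $2\theta$ of the parameter interval spanned by $u$ and $v$, so $w$ lies within $3\theta$ of the subarc of $\gamma$ between the nearest points of $u,v$, which itself lies within $2\delta+\theta$ of $[u,v]$ by thinness of triangles; the $R$-sets are handled identically using $\gamma$ reversed. The role of the symmetric differences and the $5\theta$ thresholds is exactly that points far from both $u$ and $v$ lie in $L_C(u)\cap L_C(v)$ or in neither, hence contribute nothing.

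With this in hand, (1) is quick: since $u,v\in B_w(R)$, thinness of triangles gives $[u,v]\subseteq B_w(R+\delta)$, hence $N_{10\theta}([u,v])\subseteq B_w(R+20\theta)$ — this is exactly the role of the constant $20\theta$ — and $C$ and $C'$ agree on this ball by hypothesis. It then remains to check that the inequalities ``$d(w,x)\le d(u,x)$'', etc., cutting out the four sets are read off from the common data, i.e.\ that the \emph{side} of $u$ (not merely the underlying points) matches for $C$ and $C'$; for this one uses that $C$ and $C'$ are co-oriented along the region where they agree, which holds in the situations where this lemma is applied. Then the four signed cardinalities are the same for $C$ and $C'$, so $\diff_C(u,v)=\diff_{C'}(u,v)$.

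For (2): if $u\in C\cap B_w(R)=C'\cap B_w(R)$, then by Lemma~\ref{lem: properties of slices}(1) both $[u]_C$ and $[u]_{C'}$ lie in $B_u(10\theta)$, and for any $v$ there one still has $N_{10\theta}([u,v])\subseteq B_w(R+20\theta)$, so the argument above gives $\diff_C(u,v)=\diff_{C'}(u,v)$; hence $[u]_C=\{v\in C:\diff_C(u,v)=0\}=\{v\in C':\diff_{C'}(u,v)=0\}=[u]_{C'}$. For (3): when $x=x'$ the point $x$ lies in $C\cap C'$ and is the least slice of both $\prec_C$ and $\prec_{C'}$; moreover $[u]_C=[u]_{C'}\ne[v]_{C'}=[v]_C$, the inequality because $[u]_C\prec_C[v]_C$, so $\diff_C(u,v)$ and $\diff_{C'}(u,v)$ are both non-zero, and the locality argument along a geodesic from $u$ to $v$, together with the common least slice excluding an orientation reversal, forces them to have the same sign, i.e.\ $[u]_{C'}\prec_{C'}[v]_{C'}$.

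The step I expect to be the real obstacle is the locality claim underlying (1): controlling the accumulated $O(\theta)$ and $O(\delta)$ errors so that everything stays inside $B_w(R+20\theta)$, and — more delicately — checking that the orientation data, not merely the point-sets, transfers between $C$ and $C'$. This is where the precise form of the definitions of $L_C$, $R_C$ and $\diff_C$ does the work; it is the content of Proposition~3.6 of \cite{rips1995canonical} and Lemme~I.1 of \cite{delzant1995image}.
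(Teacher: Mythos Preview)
The paper gives no proof of this lemma: it is stated with a terminal \qed\ and attributed to Proposition~3.6 of \cite{rips1995canonical} and Lemme~I.1 of \cite{delzant1995image}, so there is nothing in the paper itself to compare your argument against. Your sketch --- reducing everything to the locality of $\diff_C$, i.e.\ showing that the symmetric differences $L_C(u)\triangle L_C(v)$ and $R_C(u)\triangle R_C(v)$ lie in a $O(\theta)$-neighbourhood of $[u,v]$ and hence inside $B_w(R+20\theta)$ --- is precisely the content of those cited results, and your deductions of (2) and (3) from (1) are the intended ones.

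One remark on the point you flag as the obstacle. You are right that the membership conditions for $L_C(u)$ and $L_{C'}(u)$ reference the distinct endpoints $x$ and $x'$, so knowing $C\cap B_w(R+20\theta)=C'\cap B_w(R+20\theta)$ as \emph{sets} is not by itself enough to conclude $L_C(u)\setminus L_C(v)=L_{C'}(u)\setminus L_{C'}(v)$; indeed for $C'=\bar C$ one gets $\diff_{C'}=-\diff_C$. The way out is exactly what you indicate: in the overlap region the two cylinders fellow-travel a common geodesic segment, and the conditions $d(\cdot,x)\le d(u,x)$ and $d(\cdot,x')\le d(u,x')$ either agree (co-oriented case) or are swapped with the $R$-conditions (anti-oriented case). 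In the first case (1) holds as stated; in the second one gets $\diff_{C'}=-\diff_C$, which still yields (2). Part (3) is only asserted under $x=x'$, which forces the co-oriented case. So your caveat is exactly the right one, and the lemma as stated should be read with this implicit co-orientation for (1); this is how it is used in Lemma~\ref{lem: slices in triangles}.
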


\paragraph{Global stability.}
A $\delta$-hyperbolic graph $X$ with a group action $G\actson X$ \emph{admits $G$-globally stable cylinders} if
there is function $C:X\times X\to 2^{X}$ and $\theta,\tau>0$ such that: 
\begin{itemize}
\item (\emph{$\theta$-cylinders}) $C(x,y)$ is a $\theta$-cylinder from $x$ to $y$ for all $x,y\in X$;
\item (\emph{$G$-invariance}) $gC(x,y)=C(gx,gy)$ for all $g\in G$ and $x,y\in  X$;
\item (\emph{inversion invariance}) $\bar{C}(x,y)=C(y,x)$ for all $x,y\in  X$; and
\item (\emph{$\tau$-stability}) for all $x,y,z\in X$ there exists a finite set $F\subseteq \tilde{X}$ of size $|F|\le \tau$ such that $C(x,y)\cap B - F = C(x,z)\cap B - F$ where $B$ is the ball of radius $$(y.z)_x= \frac{1}{2}(d(x,y)+d(x,z)-d(y,z))$$ around $x$. See Figure \ref{fig: stable cylinders}.
\end{itemize}
A hyperbolic group \emph{admits globally stable cylinders} if its Cayley complex with respect to some (hence every) presentation admits gloablly stable cylinders.

\begin{figure}[ht]
    \centering
    \includegraphics[width=0.75\textwidth]{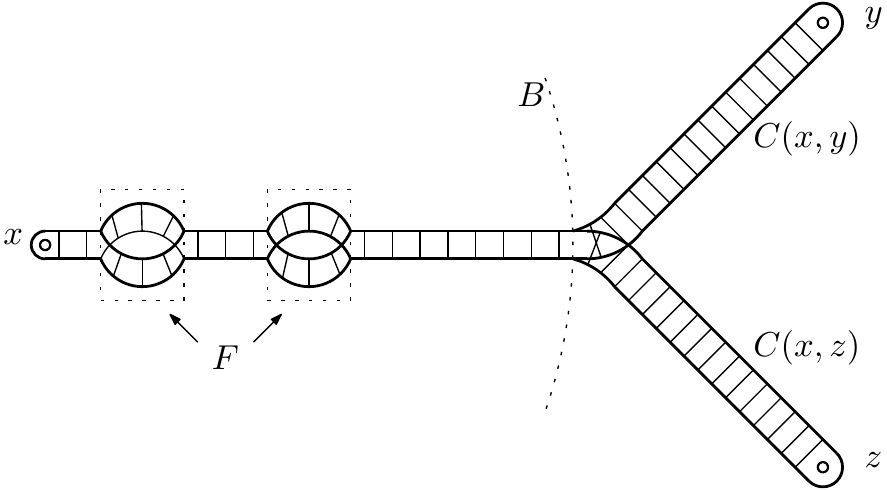}
    \caption{Stable cylinders and their slices.}
    \label{fig: stable cylinders}
\end{figure}

It is shown in the appendix of \cite{rips1995canonical} that $C'(1/8)$ groups admit globally stable cylinders. In \cite{stabilityforCCC} it is shown that hyperbolic cubulated groups admit globally stable cylinders.

\medskip
\begin{lemma}\label{lem: slices in triangles}
If $C: X\times  X\to 2^{ X}$ is globally stable then
\begin{enumerate}[label=(\arabic*)]
    \item\label{lem: slices in triangles - edges} For all $x,y\in X$ the slices of $C(x,y)$ coincide with those of $C(y,x)$, but are reversely ordered.
    \item \label{lem: slices in triangles - triangles} there is a constant $\epsilon = \epsilon(\delta,\theta,\tau)$ with the following property.
    For every $x,y,z\in X$, if we denote by $n$ the number of slices in $C(x,y)$ then there exists $1\le k\le n$ such that with the exception of at most $\epsilon$ slices the first $k$ slices of $C(x,y)$ are equal to the first $k$ slices of $C(x,z)$ and are ordered in the same way, and the last $n-k$ slices of $C(x,y)$ are equal to the last $n-k$ slices of $C(z,y)$ and are ordered in the same way.
\end{enumerate}
\end{lemma}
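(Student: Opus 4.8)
The plan is to derive both parts from the stability axioms already established, reducing everything to the local stability statements of Lemma \ref{lem: stability of slices}. For part \ref{lem: slices in triangles - edges}, the inversion invariance axiom gives $\bar C(x,y) = C(y,x)$ as $\theta$-cylinders on the same underlying set, so $L_{C(y,x)} = R_{C(x,y)}$ and $R_{C(y,x)} = L_{C(x,y)}$ by definition; hence $\diff_{C(y,x)}(u,v) = \diff_{\bar C(x,y)}(u,v) = -\diff_{C(x,y)}(u,v)$ by Lemma \ref{lem: properties of difference}(1). Therefore the zero-difference equivalence relation is the same for both cylinders — so the slices coincide as sets — while the sign flip on $\diff$ means the well-ordering $\prec$ is reversed. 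This part is essentially bookkeeping.

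For part \ref{lem: slices in triangles - triangles}, fix $x,y,z$ and set $r = (y.z)_x$. The $\tau$-stability axiom produces a set $F$ with $|F|\le\tau$ such that $C(x,y)$ and $C(x,z)$ agree on $B_x(r)$ outside of $F$. I would first want a clean "agree on a genuine ball" statement: the points of $F$ are few, so one can find a slightly smaller radius, or argue that removing $\tau$ points affects only boundedly many slices since each slice has diameter $\le 10\theta$ (Lemma \ref{lem: properties of slices}\ref{lem: properties of slices - bounded slices}) and, by Lemma \ref{lem: properties of slices}\ref{lem: properties of slices - q.i of slices}, slices are spread out at a definite rate, so a bounded set meets only boundedly many of them. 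Then invoke the stability of the difference and of slices (Lemma \ref{lem: stability of slices}(1),(2)), taking $R$ roughly $r - 20\theta$: the slices of $C(x,y)$ contained in this ball agree, as slices, with those of $C(x,z)$, and by stability of the ordering (Lemma \ref{lem: stability of slices}(3), applicable since both cylinders start at $x$) they are ordered the same way. Let $k$ count these common initial slices. Symmetrically, applying the inversion axiom to turn $C(x,y)$ and $C(z,y)$ into cylinders ending at $y$, the slices near $y$ — within distance about $(x.z)_y$ of $y$ — agree and are ordered the same way; since in a $\delta$-thin triangle the three Gromov products partition the geodesics up to a bounded error, the "near $x$" slices and the "near $y$" slices together exhaust all but a bounded number $\epsilon = \epsilon(\delta,\theta,\tau)$ of the $n$ slices of $C(x,y)$.

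The main obstacle is the careful accounting in the middle: one must show that after excising $F$ (and a bounded neighborhood of the "center" of the triangle, where the three sides diverge) the leftover ambiguity is bounded by a constant depending only on $\delta,\theta,\tau$, and in particular independent of $x,y,z$ and of $n$. This requires combining the thin-triangles estimate — geodesics $[x,y]$ and $[x,z]$ $2\delta$-fellow-travel up to distance $\approx (y.z)_x$ from $x$ — with the fact, from Lemma \ref{lem: properties of slices}, that over any interval of length $L$ along a cylinder there are at most $10\theta L$ slices, so that a region of bounded diameter (the center of the triangle, plus the $\theta$-neighborhoods where $C(x,y)\subseteq N_\theta(\gamma)$ forces the extra width) carries only boundedly many slices. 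Pinning the constant $k$ down so that it works simultaneously for the $C(x,z)$-comparison on the left and the $C(z,y)$-comparison on the right, while absorbing the at-most-$\tau$ exceptional points at each of the two comparisons, is where the constant $\epsilon$ gets its value; the rest is a matter of writing the inequalities in the right order.
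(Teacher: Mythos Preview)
Your proposal is correct and is precisely the approach the paper has in mind: the paper's proof is the single sentence ``It follows from the definition of global stability and Lemma \ref{lem: stability of slices},'' and your write-up is a faithful unpacking of that sentence, using inversion invariance plus Lemma \ref{lem: properties of difference}(1) for part \ref{lem: slices in triangles - edges}, and $\tau$-stability plus Lemma \ref{lem: stability of slices} together with the slice-counting bound of Lemma \ref{lem: properties of slices} for part \ref{lem: slices in triangles - triangles}. The obstacle you flag --- that the exceptional set $F$ prevents exact agreement on a ball, so one must localize away from $F$ and count (via Lemma \ref{lem: properties of slices}\ref{lem: properties of slices - q.i of slices}) the boundedly many slices near $F$ and near the center of the triangle --- is exactly the bookkeeping hidden in the paper's one-liner.
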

\begin{proof}
    It follows from the definition of global stability and Lemma \ref{lem: stability of slices}.
\end{proof}


This motivates the definition of the following singular foliation for groups acting on $X$.

\section{Delzant's singular foliation and accessibility}\label{sec: the singular foliation}
Let $G$ be a one-ended group with a finite presentation $G=\gen{x_1,\ldots,x_s | u_1,\ldots,u_t}$ in which all the relations are of length $\le 3$. 
Let $\phi:G\to \Aut(X)$ be a free action of $G$ on $X$.
Assume that $X$ admits $G$-globally stable cylinders $C:X\times X\to 2^X$.
Delzant \cite{delzant1995image} constructs a (discrete) singular foliation on the presentation complex $P_G$ of $G$ which we slightly adapt to our setting in the following way.\footnote{In \cite{delzant1995image} $\phi$ is not assumed to be an embedding, and $G$ is not assumed to be one-ended. However in our application this is the case. On the other hand, Delzant assumes a different stability property which is not the same as the global stability discussed above.}

We begin by describing a singular foliation on the universal cover $\tild{P}_G$.
The vertices $\tild{P}_G ^{(0)}$ of $\tild{P}_G$ are in one-to-one correspondence with elements of $G$. 
So we write $\tild{P}_G ^{(0)}=G$. 
Fix a vertex $x\in X^{(0)}$, and define the map $\Phi_x:\tild{P}_G ^{(0)} =G\to X^{(0)}$ by $\Phi_x(g) = \phi(g)x$.
For each (oriented) edge $e\subset \tild{P}_G^{(1)}$ with endpoints $i(e),t(e)$, map $\Phi_x(e)$ equivariantly to the geodesic in $X$ connecting $\Phi_x(i(e)), \Phi_x(t(e))$. Let $C_{x}(e) = C(\Phi_x(i(e)),\Phi_x(t(e)))$ be the cylinder in $X$. 
We mark equally spaced points on the edge $e$ one for each slice of $C_x(e)$ ordered according to the order in which they appear in $C_x(e)$ (as described above). 
Note that by Lemma \ref{lem: slices in triangles}.\ref{lem: slices in triangles - edges} the assignment of a marked point on the edge to the slice in $ X$ does not depend on the orientation of the edge $e$.
In each triangle (or bigon) $\Delta\subset \tild{P}_G^{(2)}$, we consider the following disjoint collection of closed arcs (see Figure \ref{fig: singular foliation}):
\begin{itemize}
    \item (\emph{regular arcs}) We connect with an arc the marked points on the edges if their corresponding slices in $X$ coincide. By Lemma \ref{lem: slices in triangles}.\ref{lem: slices in triangles - triangles}, this leaves at most $3\epsilon$ not connected.
    \item (\emph{singular arcs}) To each of the remaining points we attach an arc which ends in the interior of the triangle $\Delta$. 
\end{itemize}
Let $\tild{\calF}\subset \tild{P}_G$ be the union of all of these arcs and points.
By the $G$-invariance of the cylinders, we can choose $\tild{\calF}$ to be $G$-invariant.

\begin{figure}[ht]
    \centering
    \includegraphics[width=0.25\textwidth]{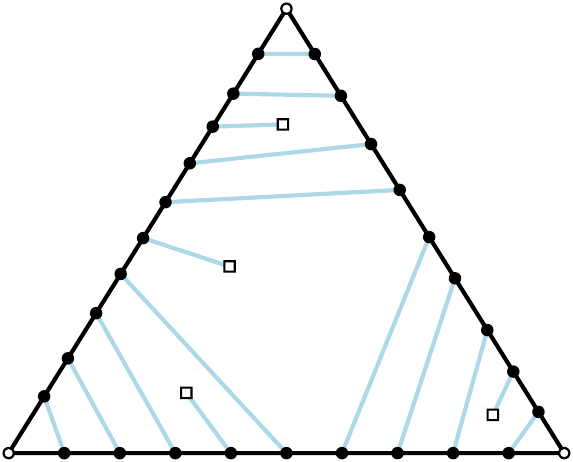}
    \caption{The singular foliation on a triangle $\Delta\subseteq\tild{P}_G$.}
    \label{fig: singular foliation}
\end{figure}

Let $\calF=\tild{\calF}/G$ be the singular foliation on $P_G$.
We call the connected components of $\tild{\calF}$ (resp. $\calF$) the \emph{leaves} of this foliation.
Each leaf of $\tild{\calF}$ (resp. $\calF$) is associated with a slice in a cylinder in $X$ (resp. a $\phi(G)$-orbit of slices in $X$). 
We note that the map that maps a leaf of $\calF$ (resp. $\tild{\calF}$) to its corresponding slice in $X$ (resp. $\phi(H)$-orbit of slices in $X$) is not necessarily injective.

Our goal is to show that up to conjugating $\phi$ we can bound the number of leaves in the foliation $\calF$ by a linear function of $s$ and $t$. More explicitly,


\begin{lemma} \label{lem: accessibility upper bound}
    Let $G$ be a hyperbolic group with globally stable cylinders. There exists $\omega = \omega(\delta,\theta,\tau)\ge  0$ such that if $G=\gen{x_1,\ldots,x_s | u_1,\ldots,u_t}$ is one-ended, and all relations are of length $\le 3$, and if $\phi:G \to \Aut(X)$ is a free action. Then there exists $x\in X$ such that the number of leaves in the singular foliation $\calF$ induced by $\Phi_x$ on the presentation complex $P_G$ of $G$ is bounded by $ \omega \cdot t$.
\end{lemma}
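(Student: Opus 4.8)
The plan is to count leaves by counting the points where leaves meet the $1$-skeleton $\tild P_G^{(1)}$, and to organize this count according to how many slices each edge of $\tild P_G$ carries. Fix a base edge type, say the edge $e$ labeled by a generator $x_i$; its image $\Phi_x(e)$ is a geodesic in $X$ between $x$ and $\phi(x_i)x$, and the number of marked points on $e$ is the number of slices of the cylinder $C(x,\phi(x_i)x)$. By Lemma \ref{lem: properties of slices}\ref{lem: properties of slices - q.i of slices} this number is comparable to $d(x,\phi(x_i)x)$ up to the multiplicative constant $10\theta$, and by Lemma \ref{lem: properties of slices}\ref{lem: properties of slices - bounded slices} and the triangle inequality it is at most $10\theta\cdot d(x,\phi(x_i)x)$ plus a bounded error. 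So, summing over the $s$ generator-edges, the total number of marked points on a fundamental domain of $\tild P_G^{(1)}$ is bounded by $C_1\sum_{i=1}^s d(x,\phi(x_i)x)$ for a constant $C_1=C_1(\theta)$. Dividing out by $G$, the number of leaves of $\calF$ is at most the number of arc-endpoints on $P_G^{(1)}$, hence at most $C_1\sum_i d(x,\phi(x_i)x)$; so it suffices to find $x\in X$ making $\sum_i d(x,\phi(x_i)x)$ linear in $t$.

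Here is where the choice of $x$ (equivalently, conjugating $\phi$) enters. Consider the displacement function $f(x)=\sum_{i=1}^s d(x,\phi(x_i)x)$ on $X^{(0)}$, and let $x$ be a point (approximately) minimizing $f$; since the action is free and cocompact such a near-minimum exists up to bounded error. I claim $f(x)$ is then controlled by $t$, the number of relators. The mechanism is the classical Delzant/Rips-Sela bound via the singular foliation itself: each relator $u_j$ (of length $\le 3$) is a triangle or bigon in $P_G$, and in each such triangle all but at most $3\epsilon$ of the marked points lie on regular arcs (Lemma \ref{lem: slices in triangles}\ref{lem: slices in triangles - triangles}), so singular leaves — leaves that terminate in the interior of some triangle — number at most $\epsilon t$ in $\tild\calF/G$. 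A leaf that is \emph{not} singular either is a closed loop or runs off to infinity; but $X$ (hence $\tild P_G$, hence $G$) is one-ended, and a standard argument (a leaf with no singularities would give a "band" crossing $\tild P_G$, violating one-endedness, or would separate $\tild P_G$ nontrivially) shows that at the minimizing $x$ every leaf must carry at least one singular point, i.e. every leaf is singular. Combining, the number of leaves is $\le \epsilon t$, and tracing this back through the comparison of the previous paragraph gives $f(x)\le C_2 t$ and the desired bound with $\omega = \omega(\delta,\theta,\tau)$.

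More carefully, the argument should be run in the order: (1) set up the marked-point count and the inequality "number of leaves of $\calF$ $\le C_1 f(x)$"; (2) for the reverse direction, observe "number of singular leaves $\le \epsilon t$" directly from Lemma \ref{lem: slices in triangles}\ref{lem: slices in triangles - triangles} summed over the $t$ relator-triangles; (3) prove the key topological input that at a minimizing $x$ there are no non-singular leaves — this is the heart of the matter. I expect step (3) to be the main obstacle. One wants to show that a regular leaf (a leaf consisting only of regular arcs and marked points, never entering a triangle interior) gives rise either to an essential loop or to a proper arc in $\tild P_G$ along which the $X$-slice is constant; pushing $x$ off that slice would then strictly decrease $f$, contradicting minimality, unless the leaf is a closed loop — and a closed regular leaf bounds a subcomplex on which one can again reduce $f$, or is excluded because slices are linearly ordered in each cylinder so a closed leaf cannot occur. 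The one-endedness of $G$ is exactly what is needed to rule out the remaining "leaf going to infinity on both sides without reducing $f$" scenario, via the quasi-isometry $G\simeq X$ and Lemma \ref{lem: properties of slices}\ref{lem: properties of slices - q.i of slices} bounding the number of slices crossed by $f(x)$. Assembling (1)–(3): $\#\{\text{leaves of }\calF\} = \#\{\text{singular leaves}\} \le \epsilon t$; and independently each leaf contributes to the marked-point count, so the same quantity is $\ge c\,f(x)$ for some $c=c(\theta)>0$; hence $f(x)\le (\epsilon/c)\,t$ and $\#\{\text{leaves of }\calF\}\le C_1 f(x) \le \omega t$ with $\omega=\omega(\delta,\theta,\tau)$.
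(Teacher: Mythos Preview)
Your decomposition into ``singular leaves'' versus ``non-singular (regular) leaves'' is exactly right, and your bound on the singular leaves (step (2), giving $\le 3\epsilon t$) is correct and matches the paper. The gap is step (3): the claim that at a minimizer of $f(x)=\sum_i d(x,\phi(x_i)x)$ \emph{every} leaf is singular is false, and the paper does not prove this. What the paper proves is weaker but sufficient: at a suitable basepoint the \emph{number} of regular (two-sided, non-singular) leaves is bounded by a constant times $s$, hence by a constant times $t$.

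Concretely, your ``push $x$ off the slice to decrease $f$'' heuristic does not work as stated: moving $x$ can decrease some $d(x,\phi(x_i)x)$ while increasing others, and there is no reason a single regular leaf forces a strict decrease of the \emph{sum}. The paper's mechanism is more structural. One takes the two-sided regular leaves in $\tild P_G$ and forms the dual Bass--Serre tree $T$; edge stabilizers are finite (they fix a slice), so one-endedness of $G$ forces a global fixed vertex $\tild v\in T$. The quotient graph of groups $\Gamma$ is then an interval of some length $k$ from $\psi(\ast)$ to $v=\pi(\tild v)$, and every edge of $P_G$ maps to a back-and-forth path in this interval. Now one replaces $x$ by a point $x'$ in the slice corresponding to the edge of $\Gamma$ adjacent to $v$. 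For edges of $\tild P_G$ whose image reaches that far, the new geodesic $\Phi_{x'}(\tild e)$ is shorter than $\Phi_x(\tild e)$ by roughly $k/(5\theta)$; for the remaining edges, both endpoints land in the same slice so $\length(\Phi_{x'}(\tild e))\le 20\theta$. Minimizing $\max_{\tild e}\length(\Phi_x(\tild e))$ (not the sum) then forces $k\le 10\theta^2=:\epsilon''$, so each edge carries at most $\epsilon''$ regular leaves, giving $\le \epsilon'' s$ Type~I leaves in total. One-endedness also kills the one-sided (Type~II) case outright, since such a leaf would yield a nontrivial amalgam over a finite group.

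A secondary issue: your final assembly asserts $\#\{\text{leaves}\}\ge c\,f(x)$, but this direction is unjustified---a single leaf can meet an edge of $P_G$ arbitrarily many times (the paper explicitly remarks this), so the number of marked points is not bounded above by a constant times the number of leaves. Fortunately this inequality is not needed once the regular-leaf count is handled correctly.
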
 

\begin{remark}
\begin{enumerate}
    \item Note that Lemma \ref{lem: accessibility upper bound} does not claim that the length of $\phi(x_i)$ is bounded. Each edge of $P_G$ can potentially traverse the same leaf of $\calF$ many times.
    \item Changing the base point $x$ and applying a left translation by an element of $\Aut(X)$ amounts to conjugating $\phi$ by an element of $\Aut(X)$. Compare Lemma \ref{lem: accessibility upper bound} with the main theorem of \cite{delzant1995image}.
\end{enumerate}
\end{remark}

\begin{proof}
We will do so by dividing the task into the different types of leaves of $\calF$:
\begin{description}
\item[Type I] The leaf does not contain a singular arc and it is two sided.
\item[Type II] The leaf does not contain a singular arc and it is one-sided.
\item[Type III] The leaf contains a singular arc.
\end{description}


\paragraph{Type I.} 

To bound the number of Type I leaves, let us consider the following graph of groups $\calG$ which is a variant of the one constructed in \cite{delzant1995image}.
Let $\tild{\calF}_{\I},\calF_{\I}$ be the set of type I leaves of $\tild{\calF},\calF$ respectively.
We will assume that $\calF_{\I}\ne \emptyset$ as otherwise there is nothing to bound.
Since all the leaves of $\tild{\calF}_{\I}$ are two sided, each of them separates $\tild{P}_G$ into 2 components. 
Let $T$ be the tree whose vertices are the components of $\tild{P}_G - \tild{\calF}$ and whose edges are the components of $\tild{\calF}$. 
See Figure \ref{fig: foliation tree}.
Clearly, $G\actson T$.
Note that by construction, the leaves of $\tild{\calF}$ correspond to slices in $X$. Therefore, the edge-stabilizers of $G\actson T$ are finite since slices are bounded (Lemma \ref{lem: properties of slices}.\ref{lem: properties of slices - bounded slices}).
The associated graph of groups $\Gamma$ has the connected components of $P_G - \calF_{\I}$ as vertices, the leaves of $\calF_{\I}$ as edges, and the image of their fundamental group in $\pi_1(P_G)=G$ as vertex/edge groups. We denote by $\pi:T\to\Gamma$ the quotient map.

\begin{figure}[ht]
    \centering
    \includegraphics[width=\textwidth]{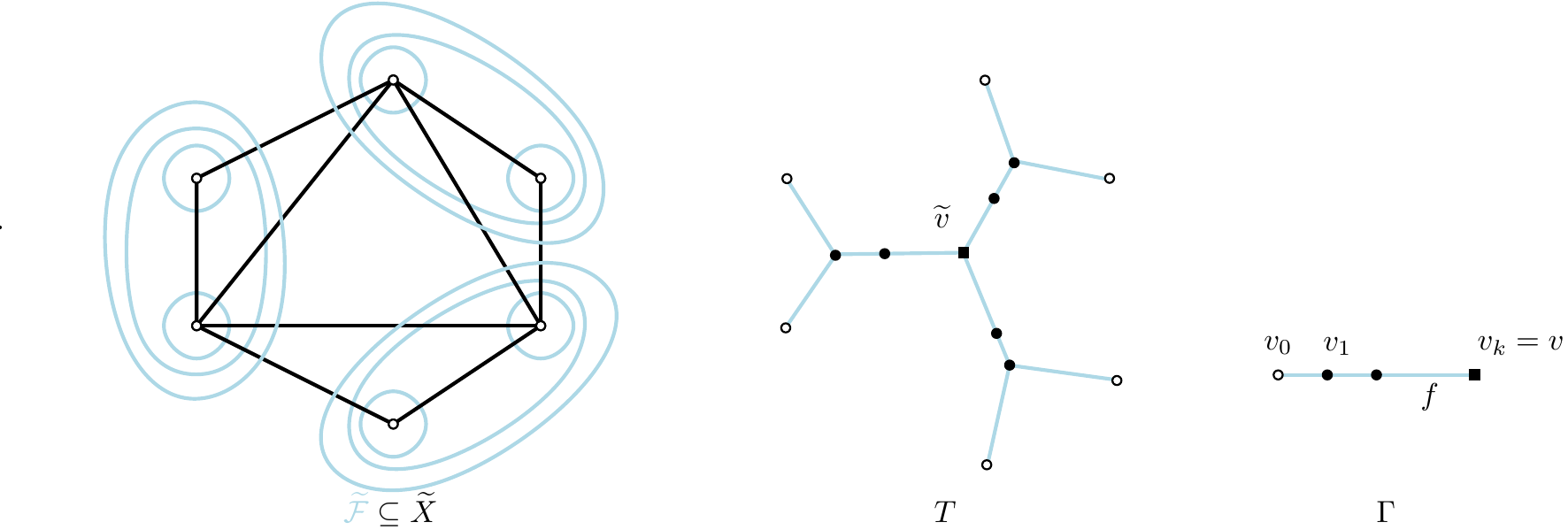}
    \caption{The foliation $\tild\calF_{\I}$ on $P_G$, the dual tree $T$, and the graph of groups $\Gamma$}
    \label{fig: foliation tree}
\end{figure}

Since $\pi_1(\Gamma) = G$ and $G$ is one-ended, the graph of groups $\Gamma$ describes a trivial splitting. I.e, $G\actson T$ has a global fixed point $\tild{v}$.
Equivalently, the underlining graph of $\Gamma$ is a tree, there is  a vertex $v=\pi(\tild{v})\in \Gamma$ whose vertex group is $\Gamma_v=G$, and for any edge $a\in \Gamma$ if $w$ is the endpoint of $a$ that is further away from $v$, then $\Gamma_w = \Gamma_a$.

There is a natural map $\tild\psi : \tild{P}_G \to T$, and similarly $\psi:P_G \to \Gamma$.
Clearly, both maps are onto. For every $e\in \tild{P}_G$, the marked points on $e$ correspond to distinct slices, therefore $\tild\psi(e)$ is a geodesic in $T$.
It follows that the underlining graph of $\Gamma$ is an interval with one endpoint $v$ and the other is $v_0=\psi(\ast)$ where $\ast\in P_G$ is the unique vertex of the presentation complex $P_G$. Moreover, if we denote by $v_0,\ldots,v_k=v$ the vertices of $\Gamma$ in the order they appear along the path from $v_0$ to $v$ then for every edge $e\subset P_G$, $\psi(e)$ is a path in $\Gamma$ of the form $v_0,v_1\ldots v_{m-1},v_m,v_{m-1},\ldots,v_1,v_0$.
The vertex $v$ has only one lift $\tild{v}$ to the tree $T$, i.e $\{\tild{v}\}=\pi\ii (v)$. Let $f$ be the unique edge incident to $v$ in $\Gamma$.
For every $g\in G = \tild{P}_G ^{(0)}$, let $\tild{f}_g$ be the unique edge in $\pi \ii (f)$ separating $\tild{\psi}(g)$ and $\tild{v}$, and let $s_g$ be the associated slice in $X$.

If we change the base point $x$ (in the definition of $\Phi_x$) to any point $x'$ in $s_1$. By $\phi$-equivariance it follows that $\Phi_{x'}(g) \in g s_1 = s_g$. For every $\tild{e}\in \tild{P}_G^{(1)}$ one of two things has happened:

Case 1. The path $\tild{\psi}(\tild{e})$ did not pass through $\pi\ii(f)$. Then both endpoints of $\tild{e}$ are sent by $\Phi_{x'}$ to the same slice, and since $\Phi_{x'}(\tild{e})$ is sent to the geodesic between them, its length is bounded by the diameter of a slice, i.e $\length(\Phi_{x'}(\tild{e}))\le 20\theta$. 

Case 2. The path $\tild{\psi}(\tild{e})$ passed through $\pi\ii(f)$. The endpoints of $\Phi_{x'}(\tild{e})$ are in slices of the cylinder $C_{x,e}$, and thus are at distance $\theta$ from the geodesic $\Phi_{x}(\tild{e})$. By Lemma \ref{lem: properties of slices}.\ref{lem: properties of slices - q.i of slices} the endpoints of $\Phi_{x'}(\tild{e})$ are at distance at least $\frac{k}{10\theta}$ from the endpoints of $\Phi_{x}(\tild{e})$. Thus, \[\length(\Phi_{x'}(\tild{e})) \le \length(\Phi_x(\tild{e})) - 2\frac{k}{10\theta} + 2 \theta.\]

If we assume that $x$ is chosen so that \[\max\{\length(\Phi_x(\tild{e}))| \tild{e}\subseteq \tild{P}_G^{(1)}\}\] is minimal, then each edge has at most $\epsilon'' = 10\theta^2$ regular leaves of the foliation $\calF$.
Therefore, we may assume that $\calF$ has at most $s\epsilon''$ Type I leaves.
Since $G$ is one-ended every generator belong to an relation, and so $s\le t/3$. Thus $\calF$ has at most  $\frac{\epsilon''}{3}t$ Type I leaves.


\paragraph{Type II.}
Dunwoody \cite{dunwoody1985accessibility} shows that the number of type II leaves is bounded by $b_1(G)$ where $b_1$ is the first betti number of $G$ (and therefore of $P_G$), which in turn is bounded by $s$ -- the number of generators of $G$ (and therefore the number of 1-cells of $P_G$). 

In fact, Type II cannot occur in our setting. A Type II leaf $\ell$ corresponds to a non-trivial free product with amalgamation $G=A*_C B$ with $A=i_*\pi_1(\ell), B=i_*\pi_1(P_G - \ell), C=i_*\pi_1(N(\ell)-\ell)$ where $i$ is the inclusion, and $N(\ell)$ is some small regular neighborhood of $\ell$.
Clearly, $C$ is finite since $\phi(C)$ stabilizes a slice, and all slices have a bounded diameter. It is easy to see that $[A:C]=2$, and hence the splitting is non-trivial.

\paragraph{Type III.}

Each slice of Type III must contain a singular arc. There are at most $3\epsilon$ singular arcs in a triangle $\Delta$. 
There are $t$ 2-cells in $P_G$, and therefore the number of Type III leaves of $\calF$ is bounded by $ 3\epsilon \cdot t$.

\bigskip

\noindent Combining the above bounds we get that $\omega = 3\epsilon + \frac{\epsilon''}{3}$ is the desired constant.
\end{proof}

\section{Boundaries and Cohomology}

\paragraph{Finiteness properties and complexity.}
Let $m\in \bbN$. A group $G$ has type $F_m$ if $G$ acts freely and cocompactly on an $(m-1)$-connected $m$-dimensional $\Delta$-complex $R=R(m,G)$.
For $i\le m$ we define $\Cx_{i,m}(G)$ to be the minimal number of $G$-orbits of cells of dimension $\le i$ in such an action\footnote{Note that we assume that $R$ is a $\Delta$-complex and not just a CW-complex.}.
In particular, $\Cx_{1,1}(G) = \rank(G)$ and (if $G$ is torsion free) $\Cx_{2,2}(G)=T(G)$ defined in the introduction.

Let $X$ be a graph. For $d\in\bbN$, the Rips complex $R_d(X)$ is the simplicial complex whose simplices are subsets of the vertex set of $X$ of diameter $\le d$. 
If $X$ is $\delta$-hyperbolic then $R_d(X)$ is contractible for $d\ge 4\delta+2$.
Thus, torsion-free hyperbolic groups are of type $F$. 
Modifying slightly the construction of Rips complexes, see \cite{bestvina1991boundary}, we see that hyperbolic groups are of type $F_\infty$, i.e of type $F_m$ for all $m$. 
Hence, if $G$ is hyperbolic then $\Cx_{i,m}(G)$ is well-defined and finite for all $i\le m$.

We will need the following fact $H^i(G;\bbZ G) = H^i_c(R(m,G);\bbZ)$ for all $i< m-1$. In fact, one can replace $R(m,G)$ with any $(m-1)$-connected complex on which $G$ acts properly and cocompactly.



\paragraph{Uniform quasi-surjectivity of quasi-isometries.}
The next proposition shows that continuous quasi-isometries between hyperbolic groups are, in a sense, uniformly quasi-surjective.

Let $G$ be a hyperbolic group. The boundary $\partial G$ is a compact metrizable space which has a finite topological (covering) dimension, $\dim(\partial G)<\infty$ (see \cite{ghys1990groupes}).
The boundary is a quasi-isometry invariant.

\begin{proposition}\label{prop: uniform quasi-onto}
Let $R$ be a hyperbolic $\Delta$-complex with a proper cocompact group action, and let $m=\dim(\partial R)+1$. 
Assume that $R$ is $m$-dimensional and $(m-1)$-connected.
Then, there exists $r_0$ such that if a group $G$ acts on $R$ properly and cocompactly, and $\Phi:R(m,G) \to R$ is any continuous $G$-equivariant quasi-isometry then $N_{r_0}(\Phi(R(m,G)))=R$.
\end{proposition}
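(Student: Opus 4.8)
The plan is to argue by contradiction, combining a compactness argument on the Rips complex with the fact that a quasi-isometry of hyperbolic spaces induces a homeomorphism of boundaries, and then detect a failure of quasi-surjectivity at the level of top-dimensional compactly-supported cohomology. Suppose no such $r_0$ exists. Then for every $r$ there is a group $G_r$ acting properly cocompactly on $R$, a continuous $G_r$-equivariant quasi-isometry $\Phi_r\colon R(m,G_r)\to R$, and a point $p_r\in R$ with $d(p_r,\Phi_r(R(m,G_r)))>r$. Since the quasi-isometry constants of $\Phi_r$ are controlled by the cocompactness data — here one must be slightly careful and either fix a uniform bound or pass to a subsequence where the constants are bounded, using that $G_r$ acts cocompactly with bounded-size quotient — we get a ball $B(p_r,r)$ of arbitrarily large radius in $R$ that is entirely missed by the image. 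After translating by $G_r$ (or $\Aut$) and invoking cocompactness of $R$, we may assume $p_r$ lies in a fixed compact fundamental domain, so $p_r\to p_\infty$ along a subsequence.

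Next I would exploit hyperbolicity: because $B(p_r,r)$ is disjoint from $\Phi_r(R(m,G_r))$ and $\Phi_r$ is a quasi-isometry, the image $\Phi_r(R(m,G_r))$ "looks one-sided near $p_r$" in a quantitative way. More precisely, since $R(m,G_r)$ and $R$ are quasi-isometric, $\partial(R(m,G_r))\cong\partial R$ via the boundary extension of $\Phi_r$; but $R(m,G_r)$ being the Rips complex of $G_r$, its boundary has dimension $\le\dim\partial R=m-1$, consistent with $m$-dimensionality of $R$. The key geometric input is that a quasi-isometrically embedded subset whose image avoids an arbitrarily large ball $B(p_r,r)$ must "cut off" a nonempty open subset of the boundary $\partial R$ — a shadow of $B(p_r,r)$ — on which $\Phi_r$ fails to be onto. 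Here I would use a Bestvina--Mess style argument: the contractibility and $(m-1)$-connectedness of $R$, together with $H^m_c(R;\bbZ)=H^{m-1}(\partial R;\bbZ)$-type identifications (the fact quoted in the excerpt, $H^i(G;\bbZ G)=H^i_c(R(m,G);\bbZ)$ for $i<m-1$, and its boundary version), force that a $G_r$-equivariant continuous map which misses a large ball cannot induce an isomorphism on $H^m_c$ — contradicting that $\Phi_r$ is a proper homotopy equivalence onto a quasi-dense subset, hence a quasi-isometry inducing an iso on $H^*_c$.

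Concretely, the contradiction I would extract is cohomological. Let $U_r\subset R$ be the complement of a slightly smaller ball $B(p_r,r/2)$; then $\Phi_r$ factors (up to bounded homotopy, using the continuity and the quasi-isometry bound $r_0'$) through $U_r$. Taking $r\to\infty$ and using properness, this exhibits the identity-class in $H^m_c(R;\bbZ)$ (which is nonzero, being $\cong\cechH^{m-1}(\partial R;\bbZ)\ne 0$ by the dimension-$m-1$ hypothesis on $\partial R$ and a boundary-of-a-group computation) as coming from $H^m_c(U_r;\bbZ)$; but the excision/long-exact-sequence comparison $H^m_c(B(p_r,r/2))\to H^m_c(R)\to H^m_c(U_r)$ shows the restriction map kills the fundamental class once the removed ball is large enough to carry it — since $R$ is $m$-dimensional with one end-type structure controlled by $\partial R$. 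This is the contradiction, and it pins down $r_0$ in terms of the quasi-isometry constants (themselves bounded by the cocompactness data of the action on $R$) and the geometry of $R$.

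The main obstacle, and the step needing the most care, is the \emph{uniformity}: extracting $r_0$ depending only on $R$ and not on $G$. The quasi-isometry constants of $\Phi_r$ are not a priori bounded — a group $G_r$ acting on $R$ could in principle do so with larger and larger "displacement". I would handle this by noting that a $G_r$-equivariant continuous quasi-isometry from $R(m,G_r)$ to $R$ is determined up to bounded error by where it sends a fundamental domain, and cocompactness of the $G_r$-action on $R$ bounds the relevant quasi-isometry constants in terms of $R$ alone (the "coarse density" constant of the $G_r$-orbit in $R$ is at most the diameter of the fundamental domain, which is fixed); combined with equivariance this gives uniform control. Modulo that uniformity lemma, the cohomological contradiction above is the heart of the matter and the rest is a compactness-and-diagonalization packaging.
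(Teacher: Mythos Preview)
Your cohomological ingredients are the right ones --- Bestvina--Mess, the identification of $\check H^{m-1}(\partial R)$ with a direct limit over compact exhaustions, and the fact that $\Phi$ induces a homeomorphism on boundaries. But the packaging via a sequence $(G_r,\Phi_r,p_r)$ and a limiting argument creates a difficulty that you yourself flag as ``the main obstacle'', and your proposed fix for that obstacle does not work. You claim that cocompactness of the $G_r$-action on $R$ bounds the quasi-isometry constants in terms of $R$ alone, since the coarse density of a $G_r$-orbit is at most the diameter of a fundamental domain. But the whole point of the proposition is that $G$ ranges over \emph{all} groups acting properly cocompactly on $R$, and the diameter of a fundamental domain for such an action is unbounded (think of finite-index subgroups of a fixed lattice). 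So the quasi-isometry constants of the $\Phi_r$ are genuinely not uniformly bounded, and your compactness/diagonalization strategy stalls exactly where you feared it would.

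The paper's proof sidesteps this entirely, and the key insight you are missing is that $r_0$ can be chosen from $R$ \emph{alone}, before any $G$ or $\Phi$ enters the picture. Fix a basepoint $x_0\in R$. By Bestvina--Mess and the direct-limit description $\check H^{m-1}(\partial R;\bbZ)=\varinjlim_K H^{m-1}(R-K;\bbZ)$, a nonzero class $\alpha\in\check H^{m-1}(\partial R;\bbZ)$ is already represented in $H^{m-1}(R-B_{r_0};\bbZ)$ for some finite $r_0$; this $r_0$ depends only on $R$, $x_0$, and $\alpha$. Now for \emph{any} $G$ and any continuous equivariant quasi-isometry $\Phi$, if $\Phi$ missed $B_{r_0}$ then $\Phi^*$ would carry $\alpha$ into $H^{m-1}(R(m,G)-\Phi^{-1}(B_{r_0}))=H^{m-1}(R(m,G))=0$ (using $(m-1)$-connectedness), contradicting that $(\partial\Phi)^*\alpha\ne 0$. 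No quasi-isometry constants, no sequences, no diagonalization: the uniformity is automatic because the choice of $r_0$ never looked at $G$. Your $H^m_c$ formulation with a ``fundamental class'' is morally the same picture, but $R$ is not a manifold, and working directly with $H^{m-1}$ of complements and the direct limit to $\check H^{m-1}(\partial R)$ is both more precise and what makes the uniformity transparent.
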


\begin{proof}
Since $R$ admits some cocompact action, it suffices to show that the intersection $\Phi(R(m,G)) \cap B_{r_0}\ne \emptyset$ where $B_{r_0}$ is the ball of radius $r_0$ around some fixed point $x_0$ in $R$.

The orbit map of the action $G\actson R$ is a quasi-isometry which induces a boundary homeomorphism $\partial\Phi : \partial G \to \partial R$.
By Bestvina-Mess \cite{bestvina1991boundary} we have \[m-1=\dim\partial R = \max\{k \;|\; \cechH ^k(\partial R;\bbZ) \ne 0\}\] where $\cechH ^k$ is the reduced \v{C}ech cohomology (see \cite{walsh1981dimension}),
and  $$H^m(G;\bbZ G) = \cechH ^{m-1}(\partial R;\bbZ).$$
By \cite{geoghegan1986note}, one can compute $H^m(G;\bbZ G)$ as the limit over all compact subsets $K\subseteq R$ of the cohomology groups
$H^{m-1}(R-K;\bbZ)$.
Thus,
\begin{equation*}\label{eq: bestvina-mess cohomology}
    \varinjlim_K H^{m-1}(R-K;\bbZ)=\cechH ^{m-1}(\partial R;\bbZ).
\end{equation*}

Let $0\ne \alpha\in \cechH ^{m-1}(\partial R;\bbZ)$, then there exists $r_0$ such that $\alpha \in H^{m-1}(R-B_{r_0};\bbZ)$.
We have the commuting diagram
\[\begin{tikzcd}
&\alpha\in H^{m-1}(R-B_{r_0}) \arrow[d]\arrow[r,"\Phi^*"'] &H^{m-1}(R(m,G)-\Phi\ii(B_{r_0}))\arrow[d]\\
&\alpha\in \cechH ^{m-1}(\partial R;\bbZ)\arrow[r,"\simeq","(\partial \Phi)^*"'] &\cechH ^{m-1}(\partial R(m,G);\bbZ)
\end{tikzcd}
\]
If $\im\Phi\cap B_{r_0}= \emptyset$ then $\Phi\ii(B_{r_0})=\emptyset$, and it follows that \[H^{m-1}(R(m,G)-\Phi\ii(B_{r_0})) = H^{m-1}(R(m,G))=0\] since $R(m,G)$ is $(m-1)$-connected. However, this contradicts that $(\partial \Phi)^*(\alpha) \ne 0$ as $\partial \Phi$ is a homeomorphism.
\end{proof}

\section{Lower bounds on complexity}
\label{sec: proof of thm A}

Theorem \ref{thm: main volume complexity} follows from the following theorem. Note that unlike Theorem \ref{thm: main volume complexity} the following is non-vacuous for hyperbolic groups with torsion.

\begin{theorem}\label{thm: main volume complexity upgraded}
    Let $X$ be a one-ended hyperbolic graph which admits $\Aut(X)$-globally stable cylinders. Then there exists $\alpha=\alpha(X)>0$ such that if a group $G$ acts freely and cocompactly on $X$ then $$\alpha\cdot \Vol_X(G) \le \Cx_{2,m}(G)$$ where $m=\dim(\partial G)+1$.
\end{theorem}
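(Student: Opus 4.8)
The goal is to combine the two linear bounds already assembled in the paper. On one side, given a group $G$ acting freely and cocompactly on $X$, I want to realize $G$ by a presentation with relations of length $\le 3$ whose number of relations $t$ is controlled by $\Cx_{2,m}(G)$; on the other side, I want to bound $\Vol_X(G)$ from below by the number of leaves of Delzant's singular foliation $\calF$ associated to a well-chosen base point, and then feed in Lemma~\ref{lem: accessibility upper bound}, which bounds the number of leaves by $\omega\cdot t$.

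\emph{Step 1: From the complexity to a good presentation.} Start with an $(m-1)$-connected $m$-dimensional $\Delta$-complex $R=R(m,G)$ realizing $\Cx_{2,m}(G)$, i.e. with the minimal number of $G$-orbits of cells of dimension $\le 2$. Its $2$-skeleton is a presentation complex for $G$ (after collapsing a spanning tree of the $1$-skeleton of $R/G$), and since it is a $\Delta$-complex all $2$-cells are triangles, so every relation has length exactly $3$; bigons and monogons contribute relations of length $\le 3$ as well. Hence $G$ has a presentation $\gen{x_1,\dots,x_s\mid u_1,\dots,u_t}$ with all relations of length $\le 3$, and $s+t$ is bounded by a constant times $\Cx_{2,m}(G)$ (with $s\le t/3$ using one-endedness, as in Lemma~\ref{lem: accessibility upper bound}). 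This also gives the hyperbolic-group-with-torsion version, since we do not need $R$ to be contractible, only $2$-connected enough to serve as a presentation complex and to run the foliation construction.

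\emph{Step 2: From the volume to the number of leaves.} Since $G\actson X$ freely and cocompactly, the orbit map $G\to X$ extends to a continuous $G$-equivariant quasi-isometry $\Phi\colon R(m,G)\to X$ (using that $X$ is quasi-isometric to $G$, and $m\ge \dim(\partial G)+1 = \dim(\partial X)+1$, so Proposition~\ref{prop: uniform quasi-onto} applies with $R=X$, modulo checking $X$ itself, or rather its Rips complex, satisfies the dimension hypothesis). The key point is that $\Phi$ is \emph{uniformly} quasi-surjective: $N_{r_0}(\im\Phi)=X$. Now run Delzant's construction with base point $x$ supplied by Lemma~\ref{lem: accessibility upper bound}. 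Each leaf of $\calF$ corresponds to a $\phi(G)$-orbit of slices of cylinders in $X$. The slices of all cylinders $C_x(e)$, as $e$ ranges over (lifts of) edges, together with the markings, ``cover'' a $\Phi$-image that is $r_0$-dense in $X$; and by Lemma~\ref{lem: properties of slices}\ref{lem: properties of slices - q.i of slices} consecutive slices along a cylinder move at bounded speed, while by Lemma~\ref{lem: properties of slices}\ref{lem: properties of slices - bounded slices} each slice has diameter $\le 10\theta$. So a bounded number of slices per leaf-orbit meets any given ball; counting $G$-orbits of vertices of $X$ (which is $\asymp_X \Vol_X(G)$ since $X$ is the $1$-skeleton of its Rips complex, or quasi-isometric to it) against the number of leaves, I get $\Vol_X(G)\le c\cdot \#(\text{leaves of }\calF)$ for a constant $c=c(X)$. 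This is the step I expect to be the main obstacle: making precise the covering/counting argument that turns ``leaves cover an $r_0$-dense set with bounded multiplicity'' into a clean linear inequality, in particular handling the fact (flagged in the remark after Lemma~\ref{lem: accessibility upper bound}) that the map from leaves to slice-orbits need not be injective and that a single edge may traverse a leaf many times.

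\emph{Step 3: Combine.} Chaining the inequalities,
\[
\alpha'\cdot \Vol_X(G)\ \le\ \#(\text{leaves of }\calF)\ \le\ \omega\cdot t\ \le\ \omega'\cdot \Cx_{2,m}(G),
\]
with all constants depending only on $X$ (through $\delta,\theta,\tau$ and the cocompactness constants), which gives the theorem with $\alpha=\alpha'/\omega'$. Finally, Theorem~\ref{thm: main volume complexity} follows because when $G$ is torsion-free of type $F$ one may take $R(m,G)$ to be a finite-dimensional $K(G,1)$, so $\Cx_{2,m}(G)\le \Cx(G)$, and $m=\dim(\partial G)+1$ is finite by \cite{bestvina1991boundary}.
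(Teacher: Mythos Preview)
Your outline is correct and matches the paper's proof. For the step you flag as the obstacle, the paper extends $\Phi_x$ cell-by-cell to all of $R_G$ (landing in the Rips complex of $X$, within a uniform $r_1$ of the $1$-skeleton image by hyperbolic filling), applies Proposition~\ref{prop: uniform quasi-onto} to get $r_0$-density of the full image, and then simply packs $\ge \Vol_X(G)/\nu$ disjoint $(r_0+r_1+10\theta)$-balls in $X/G$, each containing a slice in a distinct $G$-orbit and hence forcing at least that many leaves of $\calF$; the non-injectivity of the leaf-to-slice-orbit map and the multiple traversals you worry about only help this direction of the inequality (the latter is an issue for the \emph{upper} bound, already absorbed into Lemma~\ref{lem: accessibility upper bound}).
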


\begin{proof}
Let $\bfG = \Aut(X)$.
By replacing $X$ by the 1-skeleton of its Rips complex, we may assume that there is a contractible $\bfG$-complex $R_X$ with $\bfG$-globally stable cylinders such that $X=R^{(1)}$.

If $G\actson X$ freely and cocompactly then $G\actson R_X$ properly cocompactly, and acts freely on its vertices.
Let $R_G=R(m,G)$ be the complex realizing $\Cx_{2,m}$, that is, an $m$-dimensional $(m-1)$-connected $\Delta$-complex with a free cocompact $G$-action and $\Cx_{2,m}(G)$ many $G$-orbits of 2-cells.
By contracting a spanning tree in $R_G/G$ we may assume that it has a single orbit of vertices.
Let $P_G = R_G^{(2)}/G$ be its 2-skeleton. Since $R_G^{(2)}$ is simply connected and $G\actson R_G^{(2)}$ freely we know that $\pi_1(P_G)=G$. Since we also assume $P_G$ has only one vertex, $P_G$ is the presentation complex of some presentation $$G=\gen{x_1,\ldots,x_s | u_1,\ldots,u_t}$$ in which all relations have length $\le 3$ and $t=\Cx_{2,m}(G)$.




The embedding $\phi:G\into \Aut(X)$ endows its presentation complex $P_G$ with the singular foliation described in \S\ref{sec: the singular foliation}. 
By Lemma \ref{lem: accessibility upper bound}, there exists $x\in X$, such that the singular foliation $\calF$ on $P_G$ induced by $\Phi_x$ satisfies
\begin{equation}\tag{I}\label{upper bound}
    \# \{\text{leaves in }\calF\} \le \omega\cdot t
\end{equation}

Recall that $\Phi_x:R_G^{(1)}=\tild{P_G}^{(1)} \to X=R^{(1)}$ was defined by sending the vertices $R_G^{(0)}$ equivariantly to $R$, and sending edges equivariantly to the geodesics connecting them.
One can extend $\Phi_x:R_G^{(1)} \to R^{(1)}$ to a continuous $\phi$-equivariant map $\Phi:R_G \to R$ inductively in the following way. 
For all $n\le m+2$, using the $n$-connectivity of $R$, map an $n$-cell $\sigma$ of $R_G$ ($\phi$-equivariantly) to the filling $n$-disk of $\Phi|_{R_G^{(n-1)}}(\partial\sigma)$ in $R$. 
By \cite[Lemma 1.7.A]{gromov1987hyperbolic}\cite[\S4.2 Proposition 9]{ghys1990groupes}, this can be done in the $r_1$-neighborhood of $\Phi(R_G^{(1)})$, where $r_1$ depends only on $R$. 



By Proposition \ref{prop: uniform quasi-onto} there exists a number $r_0$ such that $N_{r_0}(\Phi(R_G)) = R$. 
By the definition of $r_0,r_1$ and the foliation $\tild{\calF}$, we have the following.
\begin{itemize}
\item Every $r_0$-ball in $R$ contains a point of $\Phi(R_G)$.
\item Every $r_1$-ball around a point of $\Phi(R_G)$ contains a point of $\Phi(R_G^{(1)})$.
\item Every point in $\Phi(R_G^{(1)})$ is contained in a slice corresponding to a leaf of the foliation $\tild{\calF}$. Since, by Lemma \ref{lem: properties of slices}.\ref{lem: properties of slices - bounded slices}, slices have diameter $\le 10\theta$, every $10\theta$ ball around a point in $\Phi(R_G^{(1)})$ contains a slice.
\end{itemize}
Setting $\rho=r_0+r_1+10\theta$ we get that every $\rho$-ball in $X$ contains a slice corresponding to a leaf of the singular foliation $\tild{\calF}$.

There are $\Vol_X(G)$ many $G$-orbits of points in $X$.
If $\nu$ is the number of points in a ball of radius $2\rho$ in $X$, then there are at least $\Vol_X(G) / \nu$ disjoint $\rho$-balls in $X/G$ and therefore 
\begin{equation}\label{lower bound} \tag{II}
      \frac{1} {\nu} \Vol_X(G) \le \#\{\text{ leaves in }\calF\}
\end{equation}

Combining \eqref{upper bound} and \eqref{lower bound} we get the desired inequality
\[\alpha \Vol_X(G) \le t=\Cx_{2,m}(G),\]
where $\alpha := \frac{1}{\nu\omega}>0$ depends only on $X$.
\end{proof}

We now turn to the proof of Theorem \ref{thm: main sqrt bound}. Again, one can phrase a version of Theorem \ref{thm: main sqrt bound} which uses $\Cx_{2,m}(G)$ instead of $\Cx(G)$. Since the proof follows similar lines as the proof above. Instead of repeating it, we highlight the main differences. 

\begin{proof}[Proof of Theorem \ref{thm: main sqrt bound}]
\begin{itemize}
    \item Replace the globally stable cylinders in the proof above by the cylinders constructed by Rips-Sela \cite{rips1995canonical} for proper cocompact actions of a finitely presented group $G=\gen{x_1,\ldots,x_s|u_1,\ldots,u_t}$ on a hyperbolic graph $X$. To be precise, Rips-Sela construct such stable cylinders for maps between a finitely presented group and a hyperbolic group, but the construction carries through to our setting. It has the same properties as globally stable cylinders, except that $C(\cdot,\cdot)$ is only defined for pairs of points in $X$ that differ by the action of one of the generators of $G$, the second crucial difference is that in Lemma \ref{lem: slices in triangles}.\ref{lem: slices in triangles - triangles} instead of a constant $\epsilon$ we have $\epsilon = O(t)$.
    \item Construct the singular foliation as in Delzant \cite{delzant1995image}. Note that for each 2-cell $\Delta$, the number of singular arcs in $\Delta$ is now $O(t)$.
    \item It immediately follows that the bound on Type III leaves is $O(t^2)$. Leaves of Type I and II are still bounded as in the proof of Lemma \ref{lem: accessibility upper bound}. Thus, the number of leaves in the singular foliation is now bounded by $O(t^2)$.
\end{itemize} 
The rest of the proof is the same, and yields $\Vol_X(G) = O(t^2)$.
\end{proof}

\begin{theorem}\label{thm: main upgraded}
Let $G$ be a one-ended hyperbolic group, let $m=\dim(\partial G)+1$. Then, there exist $\alpha,\beta>0$ such that if $H\le G$ is a finite index subgroup then $$\alpha\cdot \sqrt{[G:H]} \le \Cx_{2,m}(H) \le \beta \cdot [G:H].$$

If moreover $G$ admits globally stable cylinders then $$\alpha\cdot [G:H] \le \Cx_{2,m}(H) \le \beta \cdot [G:H].$$
\end{theorem}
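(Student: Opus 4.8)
The plan is to deduce this from Theorems \ref{thm: main volume complexity upgraded} and \ref{thm: main sqrt bound} (in its $\Cx_{2,m}$ form) applied to a fixed geometric model for $G$, together with an elementary upper bound. First I would fix $X$ to be the $1$-skeleton of the Rips complex $R_d(G)$ for $d$ large enough that $R_d(G)$ is contractible; then $G$ acts freely and cocompactly on $X$, and $X$ is a one-ended hyperbolic graph (one-endedness of $G$ passes to $X$). For any finite-index subgroup $H\le G$, the restricted action $H\actson X$ is still free and cocompact, and crucially $\Vol_X(H) = [G:H]\cdot \Vol_X(G)$, since the number of $H$-orbits of cells in each dimension is exactly $[G:H]$ times the number of $G$-orbits. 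This is the key identity that converts the volume-vs-complexity inequalities into index-vs-complexity inequalities.

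Next I would invoke the lower bounds. Note $\partial H = \partial G$ since $H$ is a finite-index subgroup (they are quasi-isometric), so the integer $m = \dim(\partial G)+1$ is the same for $H$ and $G$. Applying Theorem \ref{thm: main sqrt bound} (in the $\Cx_{2,m}$ form stated in the text, whose proof gives $\Vol_X(H) = O(\Cx_{2,m}(H)^2)$ with constant depending only on $X$) to the action $H\actson X$ yields
\[
\alpha'\cdot\sqrt{[G:H]\cdot\Vol_X(G)} \;=\; \alpha'\cdot\sqrt{\Vol_X(H)} \;\le\; \Cx_{2,m}(H),
\]
and absorbing the constant $\Vol_X(G)^{1/2}$ (which depends only on $G$) into $\alpha$ gives the general lower bound $\alpha\sqrt{[G:H]}\le\Cx_{2,m}(H)$. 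If $G$ admits globally stable cylinders then so does $X=R_d(G)^{(1)}$ (this is part of the setup, as in the proof of Theorem \ref{thm: main volume complexity upgraded}), and $\Aut(X)\supseteq G$ acts with globally stable cylinders; applying Theorem \ref{thm: main volume complexity upgraded} to $H\actson X$ gives $\alpha'\cdot[G:H]\cdot\Vol_X(G)\le\Cx_{2,m}(H)$, hence the linear lower bound $\alpha[G:H]\le\Cx_{2,m}(H)$.

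For the upper bound, I would build an explicit $m$-dimensional $(m-1)$-connected $\Delta$-complex for $H$ with $O([G:H])$ cells. Starting from a fixed finite $\Delta$-complex model $R(m,G)/G$ realizing $\Cx_{2,m}(G)$ (with one vertex orbit), pass to the index-$[G:H]$ cover corresponding to $H$; this is again an $m$-dimensional $(m-1)$-connected $\Delta$-complex with a free cocompact $H$-action, and its number of $H$-orbits of cells of dimension $\le 2$ equals $[G:H]\cdot\Cx_{2,m}(G)$, so $\Cx_{2,m}(H)\le\beta[G:H]$ with $\beta = \Cx_{2,m}(G)$. (One must check a cover of a $\Delta$-complex is a $\Delta$-complex, which is routine, or alternatively subdivide once.) This handles both cases uniformly and is the easy direction.

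The only genuinely delicate point — and the one I would be most careful about — is bookkeeping the constants: every constant must depend only on $G$ (equivalently on the fixed model $X=R_d(G)^{(1)}$) and not on $H$, so I must make sure the $\alpha(X)$ coming out of Theorems \ref{thm: main sqrt bound}/\ref{thm: main volume complexity upgraded} is applied with the same $X$ for all $H$, and that $\Vol_X(G)$ and $\Cx_{2,m}(G)$ are swallowed into the final $\alpha,\beta$. No new geometric idea is needed beyond the already-proven volume inequalities; the theorem is essentially their specialization to the tower of finite-index subgroups via the orbit-counting identity $\Vol_X(H)=[G:H]\Vol_X(G)$. To recover the statement of Theorem \ref{thm: main} itself (with $\Cx$ in place of $\Cx_{2,m}$) in the torsion-free case, one additionally notes $\Cx_{2,m}(H)\le\Cx(H)$ always, and that for torsion-free hyperbolic $H$ one has $\Cx(H)<\infty$ and $\Cx(H)\le\beta[G:H]$ by the same covering-space argument applied to a minimal $K(G,1)$ $\Delta$-complex.
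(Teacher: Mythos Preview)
Your approach is essentially the paper's: fix once and for all a one-ended hyperbolic graph $X$ with a free cocompact $G$-action, use the orbit-counting identity $\Vol_X(H)=[G:H]\cdot\Vol_X(G)$ to feed Theorems~\ref{thm: main sqrt bound} and~\ref{thm: main volume complexity upgraded} for the lower bounds, and obtain the upper bound from the $[G:H]$-fold cover of a minimal $R(m,G)$. The one point that needs adjusting is your choice $X=R_d(G)^{(1)}$: when $G$ has torsion the left action on the Rips complex need not be free on edges (an involution can swap the endpoints of an edge), so the free-action hypothesis of those theorems can fail; the paper instead takes $X=R(m,G)^{(1)}$, on which the action is free by definition of $R(m,G)$, and which---after arranging a single vertex orbit---is a Cayley graph of $G$ and hence inherits globally stable cylinders.
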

\begin{proof}
Assume $G$ is a one-ended hyperbolic groups. Let $R_G=R(m,G)$ be a $\Delta$-complex with $\Cx_{2,m}(G)$ $G$-orbits of 2-cells.
We may assume that $R_G$ has one $G$-orbit of vertices.
$G$ acts on $R$ freely, and so does every finite index subgroup $H\le G$.
It is immediate that $R$ is an $R(m,H)$ with $[G:H]\cdot \Cx_{2,m}(G)$ 2-cells, and thus $$\Cx_{2,m}(H) \le [G:H]\cdot \Cx_{2,m}(G)$$ giving us the desired upper bound.
 
 Let $X=R_G^{(1)}$. The lower bound follows from Theorems \ref{thm: main sqrt bound} and \ref{thm: main volume complexity} after observing that $\Vol_X(H) \asymp |R_G^{(0)}/H|= [G:H]$. 
\end{proof}

The lower bounds in Theorem \ref{thm: main} follow from the theorem above, while the upper bounds are proved similarly replacing $R(m,G)$ by the $K(G,1)$ with minimal number of cells.

\section{Multi-ended groups and Theorem \ref{thm: free splitting}}
\label{sec: multiended}

Let the group $G$ act on the set $X$ with finitely many orbits $X=Gx_1 \coprod \ldots \coprod Gx_n$, define \[V_X(G)=\sum_{i=1}^n \frac{1}{|G_{x_i}|},\] where if $G_{x_i}$ is infinite then $\frac{1}{|G_{x_i}|} = 0$.

\begin{lemma}
    If $G\actson X$, and $H\le G$ is a subgroup of finite index then $V_X(H) = [G:H]\cdot V_X(G)$.
\end{lemma}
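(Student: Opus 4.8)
The plan is to reduce the statement for a finite-index subgroup to an orbit-counting identity via the standard decomposition of an $H$-action into $G$-orbits. First I would fix an index set and, for each $i$, decompose the $G$-orbit $Gx_i$ into $H$-orbits using coset representatives: writing $G = \coprod_{j} H g_{ij} G_{x_i}$ for the double cosets $H\backslash G / G_{x_i}$, the orbit $Gx_i$ splits as $\coprod_j H(g_{ij} x_i)$, and the $H$-stabilizer of $g_{ij}x_i$ is $H \cap g_{ij} G_{x_i} g_{ij}^{-1}$. So $V_X(H) = \sum_i \sum_j \frac{1}{|H \cap g_{ij} G_{x_i} g_{ij}^{-1}|}$, with the convention that a term is $0$ when the stabilizer is infinite.

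The key computation is then the single-orbit case: for a fixed $i$, I would show $\sum_j \frac{1}{|H \cap g_{ij} G_{x_i} g_{ij}^{-1}|} = [G:H] \cdot \frac{1}{|G_{x_i}|}$. Set $S = G_{x_i}$. When $S$ is infinite, both sides are $0$ (each $H \cap g_{ij} S g_{ij}^{-1}$ has finite index in the infinite group $g_{ij}Sg_{ij}^{-1}$, hence is infinite), so assume $S$ is finite. Count the set $Gx_i$, which has $[G:S]$ elements, in two ways according to the $H$-orbit decomposition: $[G:S] = \sum_j |H(g_{ij}x_i)| = \sum_j [H : H \cap g_{ij} S g_{ij}^{-1}]$. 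Now multiply and divide each term by $|S| = |g_{ij} S g_{ij}^{-1}|$: since $H \cap g_{ij} S g_{ij}^{-1}$ has finite index in the finite group $g_{ij}Sg_{ij}^{-1}$, we get $[H : H \cap g_{ij}Sg_{ij}^{-1}] = |H|\,/\,|H \cap g_{ij}Sg_{ij}^{-1}|$ (interpreting this ratio formally, or passing to a common finite quotient), and hence $\frac{[H : H \cap g_{ij}Sg_{ij}^{-1}]}{|S|}$ relates to $\frac{1}{|H \cap g_{ij}Sg_{ij}^{-1}|}$ up to the factor $|H|$. Cleanest is to avoid dividing by possibly-infinite $|H|$: instead observe directly that $|H(g_{ij}x_i)| \cdot |H_{g_{ij}x_i}| $ has the orbit-stabilizer form only when $H$ is finite, so I would instead argue via the $G$-set $Gx_i$ and the ratio $\frac{|Gx_i|}{[G:H]}$. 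Concretely: $\sum_j [H : H\cap g_{ij}Sg_{ij}^{-1}] = [G:S]$ gives, after dividing both sides by $[G:H]$ and using $[G:S] = [G:H][H:?]$ — better, just write $\frac{1}{|H\cap g_{ij}Sg_{ij}^{-1}|} = \frac{[H:H\cap g_{ij}Sg_{ij}^{-1}]}{|H|}$ formally and sum, noting the $\frac{1}{|H|}$ factors out; combined with $[G:S] = [G:H]\cdot\frac{|H|}{|S|}$ (valid as $[G:S]/[G:H] = [H:S\cap H]\cdots$ — this is exactly where care is needed).

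Summing over $i$ then yields $V_X(H) = \sum_i [G:H]\cdot\frac{1}{|G_{x_i}|} = [G:H]\cdot V_X(G)$, as desired. The main obstacle is bookkeeping the infinite-stabilizer terms and the formal manipulation of $\frac{1}{|H|}$ when $H$ itself may be infinite: the honest way around this is to never write $|H|$ alone, but to always keep ratios of the form $[G:H]$, $[H : H\cap gSg^{-1}]$, and $[G:S]$, which are all finite, and to verify the identity $[G:S] = \sum_j [H : H\cap g_jSg_j^{-1}]$ directly from the double-coset/orbit decomposition of the finite set $Gx_i$ before introducing any reciprocals. Once that finite identity is in hand, dividing through by $[G:H]$ and recognizing $\frac{[H:H\cap g_jSg_j^{-1}]}{[G:S]/[G:H]}$ — wait, the clean endgame is: divide the identity $[G:S] = \sum_j [H:H\cap g_jSg_j^{-1}]$ by $|S|\cdot[G:H]$ to get $\frac{[G:S]}{|S|\,[G:H]} = \sum_j \frac{[H:H\cap g_jSg_j^{-1}]}{|S|\,[G:H]} = \sum_j \frac{1}{|H\cap g_jSg_j^{-1}|}\cdot\frac{|H\cap g_jSg_j^{-1}|[H:H\cap g_jSg_j^{-1}]}{|S|[G:H]}$, and since $|H\cap g_jSg_j^{-1}|[H:H\cap g_jSg_j^{-1}]$ "is" $|H|$ only when finite — so in fact the robust route is simply: the function $G\mapsto V_X(G)$ is additive over disjoint unions of $G$-sets, so it suffices to treat $X = G/S$ a single orbit, and there $V_{G/S}(G) = 1/|S|$ while $V_{G/S}(H) = \sum_{j}1/|H\cap g_jSg_j^{-1}|$; the identity $\sum_j [H:H\cap g_jSg_j^{-1}] = [G:S]$ (counting $|G/S|$ by $H$-orbits) combined with $[H:H\cap g_jSg_j^{-1}]\le |S|$ and the telescoping $\frac{1}{|H\cap g_jSg_j^{-1}|} = \frac{[H:H\cap g_jSg_j^{-1}]}{|H|}$ forces $V_{G/S}(H) = \frac{[G:S]}{|H|} = \frac{[G:H][H:S\cap\text{(conjugates)}]}{\cdots}$ — I will carry this out by first proving it when $S$ is finite (where orbit-stabilizer applies verbatim to all finite groups involved) and then observing both sides vanish when $S$ is infinite.
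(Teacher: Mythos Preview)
Your overall strategy---reduce to a single transitive $G$-set $X=G/S$, split into $H$-orbits via double cosets $H\backslash G/S$, and identify the $H$-stabilizer of $g_jS$ as $H\cap g_jSg_j^{-1}$---is exactly the paper's. The infinite-stabilizer case is also handled the same way (both sides vanish).

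The gap is in the finite-$S$ case. Your central counting identity
\[
\sum_j [H:H\cap g_jSg_j^{-1}] \;=\; [G:S]
\]
is correct but useless here: when $G$ (hence $H$) is infinite and $S$ is finite, every term on the left is infinite and so is the right side, so you cannot extract any finite information by ``dividing by $|S|\cdot[G:H]$'' or by formally writing $[H:H\cap g_jSg_j^{-1}]=|H|/|H\cap g_jSg_j^{-1}|$. Your closing remark that ``orbit-stabilizer applies verbatim to all finite groups involved'' once $S$ is finite is the error: $H$ is typically infinite.

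The fix is to take the index on the \emph{other} side. What you actually want to sum is
\[
\frac{|S|}{|H\cap g_jSg_j^{-1}|}
\;=\;
[g_jSg_j^{-1}:H\cap g_jSg_j^{-1}]
\;=\;
[S:g_j^{-1}Hg_j\cap S],
\]
which is the size of the orbit of $Hg_j$ under the \emph{right} action $H\backslash G\actsno S$ of the \emph{finite} group $S$ on the \emph{finite} set $H\backslash G$. Summing over double-coset representatives $g_j$ therefore counts $|H\backslash G|=[G:H]$, giving
\[
\sum_j \frac{1}{|H\cap g_jSg_j^{-1}|}=\frac{[G:H]}{|S|}=[G:H]\cdot V_{G/S}(G).
\]
This is precisely the paper's computation. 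So your plan is right; you just need to switch from counting $H$-orbits on the (possibly infinite) set $G/S$ to counting $S$-orbits on the finite set $H\backslash G$.
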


\begin{proof}
    Without loss of generality, we may assume that $G\actson X$ is transitive and with finite stabilizers. 
    Let $x\in X$, and let $G_x = K$ be its finite stabilizer. Identify $X=G/K$.
    Thus, the equality we wish to show becomes
    \begin{equation} \label{eq: double cosets lemma}
    [G:H] = \sum_{i=1}^m \frac{|K|}{|\Stab_H(g_iK)|},
    \end{equation}
    where $g_1K,\ldots,g_mK$ are representatives for the $H$-orbits on $G/K$. In other words, they are representatives for the double cosets $G = H g_1 K \coprod \ldots \coprod H g_m K$.
    
    The summands on the right hand side of \eqref{eq: double cosets lemma} are
    \begin{align*}
        \frac{|K|}{|\Stab_H(g_iK)|} &=  \frac{|K|}{|H\cap g_i K g_i\ii|} \\
        &=\frac{|K|}{|g_i\ii H g_i\cap K |}\\
         &=  [K:g_i\ii H g_i\cap K ]\\
        &= [K:\Stab_K(Hg_i)]
    \end{align*} 
    where the  the stabilizer in the last expression is with respect to the action $H\backslash G \actsno K$.
    The sum $ \sum_{i=1}^m \frac{|K|}{|\Stab_H(g_iK)|} $
    is thus the sum of the size of all orbits of $H\backslash G \actsno K$ which is therefore equal to $|H\backslash G| = [G:H]$ as desired.
\end{proof}

\begin{corollary}\label{lem: euler char of splittings}
    Let $\calG$ be a finite graph of groups with finite edge stabilizers, define $\chi(\calG)= \sum_{v} \frac{1}{|G_v|} - \sum_{e} \frac{1}{|G_e|}$. If $H\le G$ has finite index, and if $\calH$ is the graph of groups corresponding to $H$, then, $\chi(\calH) = [G:H]\cdot \chi(\calG)$.
\end{corollary}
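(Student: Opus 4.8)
The plan is to realize $\chi(\mathcal G)$ as $V_X(G)$ for a suitable $G$-set $X$ and then invoke the preceding lemma. Concretely, let $\mathcal G$ have vertex set $\mathcal V$ and edge set $\mathcal E$, and form the disjoint union of all coset spaces: set
\[
    X_V = \coprod_{v\in \mathcal V} G/G_v, \qquad X_E = \coprod_{e\in \mathcal E} G/G_e,
\]
each a $G$-set with finitely many orbits and finite stabilizers (the edge stabilizers are finite by hypothesis; one must check the vertex stabilizers are finite — this does follow, since a vertex group with an incident edge of finite stabilizer and infinite index would not be relevant here, but in fact we only need the edge groups finite, see below). By the definition of $V_X$ we have $V_{X_V}(G)=\sum_v \frac{1}{|G_v|}$ and $V_{X_E}(G)=\sum_e \frac{1}{|G_e|}$, so $\chi(\mathcal G)=V_{X_V}(G)-V_{X_E}(G)$.

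The second step is to apply the lemma to each of $X_V$ and $X_E$. For the subgroup $H\le G$ of finite index, the $H$-orbits on $G/G_v$ correspond to the double cosets $H\backslash G/G_v$, and the $H$-stabilizer of the coset $gG_v$ is $H\cap gG_vg^{-1}$, which is exactly an edge/vertex group of the induced graph of groups $\mathcal H$ (the standard Bass–Serre correspondence: $\mathcal H$ is obtained from the action of $H$ on the Bass–Serre tree $T$ of $\mathcal G$, whose vertex/edge orbits under $H$ are indexed by the double cosets, with stabilizers the conjugate-intersections). Hence $V_{X_V}(H)=\sum_{w\in \mathcal H^{(0)}}\frac{1}{|H_w|}$ and $V_{X_E}(H)=\sum_{a\in\mathcal H^{(1)}}\frac{1}{|H_a|}$, so $\chi(\mathcal H)=V_{X_V}(H)-V_{X_E}(H)$. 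One subtle point: if $\mathcal G$ has a vertex group $G_v$ that is infinite, then $\frac{1}{|G_v|}=0$ contributes nothing, and the corresponding terms for $\mathcal H$ are also $0$ (an infinite-index-or-not finite-index subgroup of an infinite group, intersected with conjugates, can only be infinite here because finite index in $G$ forces $H\cap gG_vg^{-1}$ to have finite index in $G_v$, hence infinite); this is compatible, so the identification $\chi(\mathcal H)=V_{X_V}(H)-V_{X_E}(H)$ survives verbatim.

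The final step is purely formal: by the lemma, $V_{X_V}(H)=[G:H]\cdot V_{X_V}(G)$ and $V_{X_E}(H)=[G:H]\cdot V_{X_E}(G)$, and subtracting gives
\[
    \chi(\mathcal H)=V_{X_V}(H)-V_{X_E}(H)=[G:H]\bigl(V_{X_V}(G)-V_{X_E}(G)\bigr)=[G:H]\cdot\chi(\mathcal G),
\]
as desired. The only real content beyond bookkeeping is the Bass–Serre correspondence identifying the vertex and edge groups of $\mathcal H$ with the conjugate-intersections $H\cap gG_vg^{-1}$ ranging over double coset representatives — I expect this matching (rather than any computation) to be the step that needs to be stated carefully, but it is entirely standard.
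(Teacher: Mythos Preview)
Your proposal is correct and is essentially the same as the paper's proof: the paper simply says ``apply the previous lemma to the vertices and edges of the Bass--Serre tree of $\calG$,'' and your $X_V$ and $X_E$ are exactly those $G$-sets written out as disjoint unions of coset spaces. Your added care about infinite vertex groups (handled by the convention $\tfrac{1}{\infty}=0$ together with the fact that $H\cap gG_vg^{-1}$ has finite index in $gG_vg^{-1}$) is a useful clarification the paper leaves implicit.
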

\begin{proof}
Apply the previous lemma to the vertices and edges of the Bass-Serre tree of $\calG$.
\end{proof}

By Dunwoody \cite{dunwoody1985accessibility}, every finitely presented group $G$ admits a \emph{maximal} splitting over finite groups, i.e, $G$ is the fundamental group of a graph of groups whose edge groups are finite and whose vertex groups have at most one end (i.e, finite or one-ended).

\begin{lemma}\label{lem: euler char of splitable}
Let $G$ be a finitely presented group, and $G=\pi_1(\calG)=\pi_1(\calG')$ where $\calG$ and $\calG'$ are maximal splittings for $G$. Then, $\chi(\calG)= \chi(\calG')$.
\end{lemma}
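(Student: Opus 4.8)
**Proof plan for Lemma \ref{lem: euler char of splitable} (invariance of $\chi$ under maximal splittings over finite groups).**

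The plan is to reduce the statement to a numerical invariant that both $\mathcal G$ and $\mathcal G'$ compute, and the natural candidate is the rational Euler characteristic of $G$. First I would recall that a finitely presented group $G$ that splits as a finite graph of groups with finite edge groups and vertex groups of type $FP_\infty$ (here: finite or one-ended hyperbolic-type, in any case type $FP$ over $\mathbb Q$ suffices — but for full generality one works with Wall's rational Euler characteristic, which is defined for any group with a finite-index subgroup of type $FP$, or more elementarily here one passes to a common finite-index torsion-free subgroup) has a well-defined rational Euler characteristic $\chi_{\mathbb Q}(G)$, and that for a graph of groups the additivity formula gives $\chi_{\mathbb Q}(G) = \sum_v \chi_{\mathbb Q}(G_v) - \sum_e \chi_{\mathbb Q}(G_e)$. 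Since each edge group $G_e$ is finite, $\chi_{\mathbb Q}(G_e) = 1/|G_e|$. The subtlety is that the vertex groups $G_v$ are not finite in general, so $\chi(\mathcal G)$ as defined in the statement is \emph{not} equal to $\chi_{\mathbb Q}(G)$ — it replaces each $\chi_{\mathbb Q}(G_v)$ for a one-ended $G_v$ by $1/|G_v| = 0$.

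So the real content is that the contribution of the one-ended vertex groups cancels out, i.e. that $\chi(\mathcal G)$ equals $\chi_{\mathbb Q}(G) - \sum_{v : G_v \text{ one-ended}} \chi_{\mathbb Q}(G_v)$, and that this latter correction term is independent of the chosen maximal splitting. The key input is the essential uniqueness of the maximal splitting over finite subgroups, which goes back to Dunwoody's accessibility theorem together with the fact that the one-ended factors are canonical: any two maximal splittings of $G$ over finite groups have the same multiset of one-ended vertex groups up to conjugacy and isomorphism. (Concretely: the infinite one-ended vertex groups are exactly the maximal one-ended subgroups appearing as vertex groups, and by accessibility and the theory of the JSJ/Stallings–Dunwoody decomposition these are determined by $G$ up to isomorphism — one can cite Dunwoody \cite{dunwoody1985accessibility} and standard Bass–Serre theory.) Granting this, $\sum_{v : G_v \text{ one-ended}} \chi_{\mathbb Q}(G_v)$ is the same for $\mathcal G$ and $\mathcal G'$, and since $\chi_{\mathbb Q}(G)$ depends only on $G$, we conclude $\chi(\mathcal G) = \chi(\mathcal G')$.

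An alternative, more self-contained route avoids rational Euler characteristics entirely: pass to a finite-index \emph{torsion-free} subgroup $H \le G$ (which exists if $G$ is virtually torsion-free; in the hyperbolic setting relevant to this paper it does, but for general finitely presented $G$ one uses the rational $\chi$ argument above). Then $H$ acts on the Bass–Serre tree of $\mathcal G$ with trivial edge stabilizers, so $H$ is the fundamental group of a graph of groups $\mathcal H$ with trivial edge groups and vertex groups that are finite-index subgroups of the $G_v$; here $\chi(\mathcal H) = b_0 - b_1$ of the underlying graph plus $\sum \chi(H_v)$, and by Corollary \ref{lem: euler char of splittings} we have $\chi(\mathcal H) = [G:H] \cdot \chi(\mathcal G)$ and likewise $\chi(\mathcal H') = [G:H]\cdot\chi(\mathcal G')$ for the splitting induced from $\mathcal G'$; since $\mathcal H$ and $\mathcal H'$ are both maximal splittings of the \emph{same} group $H$ over finite (now trivial) groups, one is reduced to the torsion-free case, where $\chi(\mathcal H) = \chi(H) = 1 - \mathrm{rank}$ of a free part plus Euler characteristics of the one-ended free factors, all manifestly isomorphism invariants of $H$. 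The main obstacle in either approach is the same: establishing that the collection of one-ended vertex groups is a genuine invariant of $G$, not just of the chosen decomposition — this is precisely Dunwoody's accessibility plus the uniqueness statement for decompositions over finite groups, and it is the one place where a nontrivial external theorem must be invoked rather than a direct computation.
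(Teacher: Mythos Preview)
Your two routes are natural but neither proves the lemma in the stated generality (arbitrary finitely presented $G$), and both differ from the paper's argument.

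In your first route the key identity $\chi(\calG)=\chi_{\bbQ}(G)-\sum_{v\text{ one-ended}}\chi_{\bbQ}(G_v)$ requires Wall's rational Euler characteristic to exist for $G$ and for each one-ended vertex group, i.e.\ these groups must be of type $VFP$ over $\bbQ$. A finitely presented one-ended group need not satisfy this (think of a one-ended $F_\infty$ group of infinite rational cohomological dimension appearing as a vertex group). Your fallback route needs $G$ to be virtually torsion-free, which also fails in general; a free product with a finitely presented infinite simple group containing torsion gives examples where no finite-index subgroup is torsion-free. You correctly isolate the structural fact that the multiset of one-ended vertex groups is an invariant of $G$, but without the Euler-characteristic identity this alone does not control the contribution of the \emph{finite} vertex and edge groups to $\chi(\calG)$, which is all that $\chi(\calG)$ actually records. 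For hyperbolic $G$ both of your extra hypotheses hold and either approach would go through, but the lemma and Theorem~\ref{thm: free splitting} are stated and used for all finitely presented groups.

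The paper's proof avoids finiteness hypotheses entirely by a direct combinatorial argument. Since every vertex group of $\calG$ has at most one end, it is elliptic in the Bass--Serre tree of $\calG'$; this yields a $G$-equivariant map of trees, which one factors as a subdivision followed by a sequence of elementary Stallings folds (in the Bestvina--Feighn taxonomy, types IA--IIIC). One then checks, fold by fold, that $\chi$ is preserved: maximality (vertex groups have at most one end) rules out exactly those fold types that would change $\chi$, or forces the relevant incident vertex and edge groups to have equal order. This uses no Euler-characteristic theory, no virtual torsion-freeness, and does not need to invoke the canonicity of the one-ended factors as a black box.
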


\begin{proof}
   Let $\calG$ and $\calG'$ be two maximal splittings. Every vertex group of $\calG$ has a fixed point in $\calG'$, this allows us to construct a map between the graph of groups and shows that one can get from $\calG$ to $\calG'$ by subdividing the edges of $\calG$ and then performing Stallings' folds. 
    
    Subdividing edges clearly does not change the $\chi$, as it creates both a vertex and an edge with the same group.
    
    It suffices to show that $\chi$ is preserved if $\calG'$ is obtained from $\calG$ by an elementary Stallings' fold. There are several types of folds that can happen. Let $T$ be the Bass-Serre tree of $\calG$, and let $\pi:T\to\calG$ be the quotient map. Let $e_1,e_2$ be the two folded edges in $T$, which share a vertex $v$, and whose other endpoints are $w_1,w_2$ respectively. 
    Using the conventions of \cite{bestvina1991bounding} we divide into cases.
    
    \paragraph{Type A. $\pi(v)\ne \pi(w_i)$ for $i=1,2$.} Here there are 3 cases:

    Type IA. $\pi(w_1)\ne\pi(w_2)$. Then necessarily $G_{w_1} = G_{e_1}$ or 
    $G_{w_2}=G_{e_2}$ as otherwise the fold would produce a vertex group with more than one end. It follows that $\chi(\calG) = \chi(\calG')$.
    
    Type IIA. There is an element $t\in G_v$ such that $te_1 = e_2$. Then again necessarily $G_{e_1}=G_{w_1}$ and it follows that $\chi(\calG) = \chi(\calG')$.
    
    Type IIIA. There is an element $g\in G - G_v$ such that $gw_1=w_2$. This cannot occur, as it will produce a vertex group with more than one end. 
    
    \paragraph{Type B/C. $\pi(v)=\pi(w_1)$.} There are 4 cases to consider:

    Type IB. $\pi(v)\ne \pi(w_2)$. This case is similar to Type IA. 
    
    Type IIB. There is an element $t\in G_v$ such that $te_1 = e_2$. This case is similar to Type IIA. 
    
    Type IIIB. There is an element $g\in G$ such that $gv = w_2$. This is again impossible.
    
    Type IIIC. There is $g\in G- G_v$ such that $ge_1=e_2$. This fold can be realized as a subdivision of the edge and two folds of the types we have already discussed.
\end{proof}

Therefore, we can define $\chi(G) := \chi(\calG)$ for some (hence any) maximal graph of group $\calG$ for $G$.

\begin{proof}[Proof of Theorem \ref{thm: free splitting}]
    First we observe that if $\chi(G)\ne 0$ then $G$ cannot contain two isomorphic subgroups of different indices. Assume that $A,B\le G$ are isomorphic subgroups of indices $a,b$ respectively. Then, we have
    \[ a \cdot \chi(G)=\chi(A)=\chi(B)=b\cdot \chi(G)\]
    where the first and last inequalities follow from Lemma \ref{lem: euler char of splittings} and the middle equality follows from Lemma \ref{lem: euler char of splitable}. Since $\chi(G)\ne 0$ we get that $a=b$.
    
    Therefore, it suffices to show that if $G$ has more than two ends then $\chi(G)<0 $.
    Let $\calG$ be a graph of groups with $\pi_1(\calG)= G$ and every vertex group has at most one end.
    We may assume without loss of generality that the graph of groups is reduced, that is, if the vertex $v$ is incident to the edge $e$ and $G_e=G_v$ then $e$ is a loop at $v$.
    For a vertex $v\in \calG$, we define \[\chi_+(v) = \frac{1}{|G_v|} - \frac{1}{2}\sum_{v\in e} \frac{1}{|G_e|}.\]
    We clearly have $\chi(\calG) = \sum_v \chi_+(v).$
    Note that $|G_v| \ge |G_e|$ and therefore $\chi_+(v)>0$ if and only if $d(v)\le 1$ and if $d(v)=1$ then $G_v=G_e$. If $d(v)=0$ then $G=G_v$ and the group has at most one end. If $d(v)=1$ and $G_v=G_e$ then the graph of groups is not reduced.
    Thus, $\chi_+(v)\le 0$. 
    If $\chi(\calG)=0$ then $\chi_+(v)=0$ for all $v\in \calG$.
    $\chi(v)=0$ happens in the following three cases.
    
    Case 1. $d(v)=2$, there are two distinct edges $e_1,e_2$ incident to $v$, and one has $|G_v| = |G_{e_1}|=|G_{e_2}|$. In this case, the graph of groups is not reduced.
    
    Case 2. $d(v)=2$, and there is only one edge $e$ incident to $v$ forming a loop, and $|G_v| = |G_{e_1}|=|G_{e_2}|$. In this case, the fundamental group of the graph of group is virtually cyclic.
    
    Case 3. $d(v)=1$ and for the edge $v\in e$, one has $|G_v|=2\cdot|G_e|$. The other end $w$ of the edge $e$ must be of the same form, and again, the graph of groups describes a virtually cyclic group.
\end{proof}

\section{Abstract commensurators}\label{sec: abstract comm}

Two isomorphisms $\phi_i:A_i\to B_i$, $i=1,2$, of finite-index subgroups $A_i,B_i$ of $G$ are \emph{equivalent}, $\phi_1\sim \phi_2$, if there exists $A\le A_1\cap A_2$ of finite index such that $\phi_1|_A = \phi_2|_A$.
The \emph{abstract commensurator} of $G$ is the group $\Comm(G)$ whose elements are equivalence classes of isomorphisms between finite-index subgroups of $G$, and group multiplication of $\phi:A_\phi\to B_\phi,\psi:A_\psi\to B_\psi$ is the restriction of the composition $\psi\circ \phi$ to its natural domain $\phi\ii(B_\phi \cap A_\psi)$.

Given an abstract commensurator $\phi:A\to B$ it is natural to consider powers $\phi^n$ of $\phi$ as abstract commensurators.
The power $\phi^n$ is naturally a map from $A_n\to B_n$ where $A_n,B_n$ are defined recursively by
\begin{align*}
    &A_1=H, & &B_1=B, \\
    &A_i=\phi\ii(B\cap A_{i-1}), & &B_i=\phi(A\cap B_{i-1})
\end{align*}
for all $i>1$.

We get the following lattice of subgroups.
\[\begin{tikzcd}[column sep = tiny]
& &      &       &G \ar[dl,dash, "a_1" description]\ar[dr,dash," b_1" description]   &       & 
\\
& &      &A_1\ar[rr,blue,"\phi"]\ar[dl,dash,"a_2" description]\ar[dr,dash," b_2" description]  &       &B_1\ar[dl,dash,"a_2"description]\ar[dr,dash," b_2"description]   & 
\\
& &A_2 \ar[rr,blue,"\phi"]\ar[dl,dash, "a_3"description]\ar[dr,dash," b_3"description] & &A_1\cap B_1 \ar[rr,blue,"\phi"]\ar[dl,dash, "a_3"description]\ar[dr,dash," b_3"description]& &B_2 \ar[dl,dash, "a_3"description]\ar[dr,dash," b_3" description]
\\
&A_3 \ar[rr,blue,"\phi"] & & A_2\cap B_1 \ar[rr,blue,"\phi"]& &A_1\cap B_2 \ar[rr,blue,"\phi"]& &B_3 
\end{tikzcd}
\]
where the black lines represent containment of subgroups and are labelled by the corresponding indices $a_i, b_i$.
It is easy to see that $a_{i}\ge a_{i+1}$ and $ b_{i}\ge  b_{i+1}$, and so these sequences stabilize. 
We also see that $a_i b_{i+1} =  b_i a_{i+1}$ and therefore if $ b_{i}>a_{i}$ then so is $ b_{i+1}>a_{i+1}$.
Let us denote by 
\begin{equation*}
    \bar a_n:=[G:A_n]= a_1\cdot \ldots \cdot  a_n\text{ and }\bar b_n:=[G:B_n]= b_1\cdot \ldots \cdot  b_n.
\end{equation*}

It follows that if $ b_1> a_1$ then 
\[
    \frac{\bar b_n}{\bar a_n}= \frac{ b_1\cdot\ldots \cdot  b_n}{ a_1\cdot\ldots \cdot a_n} \longrightarrow \infty.
\]

We summarize this discussion in the following lemma.

\begin{lemma}\label{lem: arbitrary indices}
If $G$ contains two isomorphic subgroups of different finite indices, then for every $\lambda>0$ there exist isomorphic subgroups $A,B\le G$ such that $\frac{[G:B]}{[G:A]}>\lambda$.
\end{lemma}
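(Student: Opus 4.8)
The plan is to read off the conclusion directly from the lattice diagram and the index bookkeeping already set up in the paragraphs preceding the statement, so that the proof amounts to verifying the two algebraic facts asserted there and then iterating. Suppose $G$ contains isomorphic finite-index subgroups of different indices; say $\phi\colon A\to B$ is an isomorphism with $A,B\le G$ of finite index and $[G:A]\ne[G:B]$. After possibly replacing $\phi$ by $\phi^{-1}$ we may assume $[G:B]>[G:A]$, i.e.\ $b_1>a_1$ in the notation above (here $a_1=[G:A_1]$ with $A_1=A$, and $b_1=[G:B_1]$ with $B_1=B$). Form the recursively defined chains $A_i=\phi^{-1}(B\cap A_{i-1})$ and $B_i=\phi(A\cap B_{i-1})$, which fit into the displayed lattice, and set $\bar a_n=[G:A_n]=a_1\cdots a_n$, $\bar b_n=[G:B_n]=b_1\cdots b_n$.

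The first step is to justify $a_ib_{i+1}=b_ia_{i+1}$. This is a statement about the two ``parallelogram'' faces of the lattice at level $i$: the subgroup $A_i\cap B_{i-1}$ (a typical middle node) sits inside both $A_i$ and $B_{i-1}\cap A_{i-1}$, and $\phi$ carries the inclusion $A_{i+1}\le A_i$ to the inclusion $A_i\cap B_1\le\ldots$ — concretely, since $\phi$ is an isomorphism onto its image it preserves indices, so $[A_i:A_{i+1}]=[\phi(A_i):\phi(A_{i+1})]$; combining this with the fact that in the diagram the index labelled $a_{i+1}$ (resp.\ $b_{i+1}$) is the same along each of the parallel edges at that level gives the claimed identity $a_ib_{i+1}=b_ia_{i+1}$. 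From this and $b_1>a_1$ one gets by induction that $b_i>a_i$, hence $b_i\ge a_i$ in every case and the strict inequality persists; moreover $a_i\ge a_{i+1}$ and $b_i\ge b_{i+1}$ because $A_{i+1}\le A_i$ and $B_{i+1}\le B_i$, so both sequences of positive integers are nonincreasing and therefore eventually constant, at values $a_\infty\le a_1$ and $b_\infty$ with $b_\infty>a_\infty\ge 1$, so in fact $b_\infty\ge 2$.

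The second step is the conclusion: since $b_n\ge b_\infty\ge 2$ for all large $n$ while $a_n\le a_1$ for all $n$, we have
\[
\frac{\bar b_n}{\bar a_n}=\frac{b_1\cdots b_n}{a_1\cdots a_n}\ge \frac{b_1\cdots b_n}{a_1^{\,n}}\longrightarrow\infty,
\]
because from some index on each new factor $b_n/a_n\ge b_\infty/a_1>1$ is bounded below by a constant exceeding $1$. Hence given $\lambda>0$ we may pick $n$ with $\bar b_n/\bar a_n>\lambda$ and take $A=A_n$, $B=B_n$, which are isomorphic via $\phi^n$ and satisfy $[G:B]/[G:A]>\lambda$.

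I do not expect any serious obstacle here; the only point requiring care is the index identity $a_ib_{i+1}=b_ia_{i+1}$, i.e.\ making precise which edges of the lattice carry equal index labels and why $\phi$ being an isomorphism onto its image forces that. Everything else is monotonicity of indices along a chain and the elementary observation that a nonincreasing sequence of positive integers stabilizes.
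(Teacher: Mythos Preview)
Your approach is exactly the paper's: the discussion preceding the lemma already sets up the lattice, the recursive $A_i,B_i$, the identity $a_ib_{i+1}=b_ia_{i+1}$, and the divergence $\bar b_n/\bar a_n\to\infty$, so the lemma is immediate from that bookkeeping.

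Two small slips to fix. First, the monotonicity $a_{i+1}\le a_i$ does not follow from the containment $A_{i+1}\le A_i$ alone (that only says both indices are defined); the correct reason is that $a_{i+1}=[A_i:A_{i+1}]=[\phi(A_i):\phi(A_{i+1})]=[A_{i-1}\cap B_1:A_i\cap B_1]\le[A_{i-1}:A_i]=a_i$, using the general inequality $[K:H\cap K]\le[G:H]$. Second, in your final estimate the constant lower bound on the factors should be $b_\infty/a_\infty$, not $b_\infty/a_1$ (the latter need not exceed $1$). In fact you can sidestep both issues: the identity $a_ib_{i+1}=b_ia_{i+1}$ says the ratio $b_i/a_i$ is constant, equal to $b_1/a_1>1$, so $\bar b_n/\bar a_n=(b_1/a_1)^n\to\infty$ directly, and the monotonicity of the $a_i,b_i$ is never needed.
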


\begin{proof}[Proof of Theorem \ref{thm: isomorphic finite index}]
Let $G$ be a non-elementary hyperbolic group with globally stable cylinders.
By Theorem \ref{thm: free splitting}, we may assume that $G$ is one-ended. 
Let $\alpha,\beta>0$ be the constants from Theorem \ref{thm: main}. 
By Lemma \ref{lem: arbitrary indices}, if $G$ contains two isomorphic subgroups of different finite indices then if contains two subgroups $A,B$ such that $$\frac{[G:B]}{[G:A]}>\frac{\beta}{\alpha}.$$
On the other hand, by Theorem \ref{thm: main upgraded}, we have $$ \alpha [G:B] \le \Cx_{2,m}(B) = \Cx_{2,m}(A) \le \beta [G:A].$$ A contradiction.
\end{proof}

\bibliographystyle{plain}
\bibliography{biblio}
\end{document}